\documentclass[12pt]{article}
\usepackage{amsfonts,color,curves}
\newtheorem{theorem}{Theorem}[section]

\newtheorem{cor}[theorem]{Corollary}
\newtheorem{unnumber}{}

\newtheorem{prepf}[unnumber]{Proof}
\newenvironment{proof}{\prepf\rm}{\endprepf}

\newcommand{\Aut}{\mathop{\mathrm{Aut}}}
\newcommand{\Cay}{\mathop{\mathrm{Cay}}}
\newcommand{\R}{\hbox{\color{red}R}} \newcommand{\B}{\hbox{\color{blue}B}}
\newcommand{\qed}{\qquad$\Box$}

\newcommand{\stsseven}{% must be in a picture with the following declarations
\put(0,0){\circle*{40}}
\put(255,0){\circle*{40}}
\put(510,0){\circle*{40}}
\put(135,225){\circle*{40}}
\put(375,225){\circle*{40}}
\put(255,153){\circle*{40}}
\put(255,425){\circle*{40}}
\put(0,0){\line(1,0){510}}
\put(0,0){\line(5,3){375}}
\put(0,0){\line(3,5){255}}
\put(510,0){\line(-5,3){375}}
\put(510,0){\line(-3,5){255}}
\put(255,0){\line(0,1){425}}
\put(255,144.5){\circle{289}} % this is the largest LaTeX circle
}

\begin{document}

\title{Synchronizing groups and weakly perfect graphs: a survey}
\author{Peter J. Cameron\\
\small{School of Mathematics and Statistics, University of St Andrews,}\\
\small{North Haugh, St Andrews, Fife KY16 9SS, UK}}
\date{}
\maketitle

\begin{abstract}
The concept of synchronization has been transferred from finite automata, via
transformation semigroups, to permutation groups: a permutation group $G$ on
a finite set $\Omega$ is said to be synchronizing if, for any non-permutation
$t$ on $\Omega$, the transformation monoid $\langle G,t\rangle$ contains an
element of rank~$1$. Synchronizing permutation groups are primitive and basic,
and so are affine, diagonal, or almost simple (in the O'Nan--Scott
classification).

In an earlier paper, the author, with Jo\~ao Ara\'ujo and Ben Steinberg,
surveyed what was known at the time about synchronizing permutation groups.
Since then, several new classes of groups have been tested for the
synchronizing property, using the fact that a group is non-synchronizing if
and only if it preserves a nontrivial weakly perfect graph. Cases studied 
include diagonal groups with more than two socle factors, two natural actions
of symmetric groups, and some actions of classical groups; the proofs are
scattered in the literature. My primary purpose is to collect these results
with their proofs. As a second objective, I will explain the background, and
comment on some related topics. (Reading this paper does not require knowledge
of the earlier paper.)

\paragraph{Keywords:} permutation group, primitive, synchronizing, automata,
complete mappings of groups, Baranyai's Theorem

\paragraph{MSC:} Primary 20B15; secondary 05B05, 05B15, 05D05 

\end{abstract}

\section{Introduction}

In 2017, Jo\~ao Ara\'ujo, Ben Steinberg and I published a long survey paper
about synchronizing permutation groups and related topics. Synchronizing 
groups derived from the theory of synchronization of deterministic automata, 
and the famous \v{C}ern\'y conjecture concerning them; the project was an
attempt to isolate a class of automata for which the conjecture could be
proved. But the topic took on a life of its own, and led to the introduction
of related classes of permutation groups (separating, spreading, and
$\mathbb{Q}I$) lying between primitivity and $2$-transitivity, and connected
with topics such as  representation theory, extremal combinatorics, and finite
geometry as well as automata theory.

The present paper is an update of parts of this survey. But I cast the net
less widely; I am simply concerned with synchronizing groups, where the
property has been decided for several further classes (such as diagonal groups
and actions of symmetric groups on subsets or partitions) since the survey
appeared, and new ideas such as the Hall--Paige conjecture and the
Deza--Erd\H{o}s--Frankl theorem have been put to use.

I have kept the paper self-contained by including definitions and explanations
of the concepts met along the way. However, I expect the reader to have some
basic knowledge about permutation groups. The message of the paper is that
a wide range of results and techniques in discrete mathematics can be applied
to the synchronization problem for permutation groups.

It is dedicated to Mikhail Volkov (from whom I learned much of what I know
about synchronization). I also express my gratitude to Jo\~ao Ara\'ujo, who
first inntroduced me to the subject.

\section{Permutation group properties}

Throughout this paper, $G$ will denote a permutation group on a set $\Omega$
(a subgroup of the symmetric group on $\Omega$). The cardinality of $\Omega$
is the \emph{degree} of $G$, and $G$ is \emph{transitive} if, for any two
points $\alpha,\beta\in\Omega$, there exists $g\in G$ mapping $\alpha$ to
$\beta$.

Many much-studied properties of transitive permutation groups can be defined
in a uniform way: each such property $P$ is associated with a class $C$ of
structures on the set $\Omega$ so that the group $G$ has property $P$ if and
only if there is no non-trivial $G$-invariant $C$-structure. (We say that a
structure is trivial if it is invariant under the symmetric group.) For example,
\begin{itemize}
\item $G$ is primitive if and only if it preserves no non-trivial partition;
\item $G$ is $2$-homogeneous if and only if it preserves no non-trivial
undirected graph;
\item $G$ is $2$-transitive if and only if it preserves no non-trivial directed
graph.
\end{itemize}
For example, if $G$ is not $2$-homogeneous, take an orbit of $G$ on the set
of $2$-subsets of $\Omega$ as the edge set of a graph, which is clearly
$G$-invariant. Conversely, a group preserving a non-trivial graph (that is,
a graph which is not complete or null) cannot map an edge to a non-edge, and
so is not $2$-homogeneous.

The definition of primitivity requires $G$ to be transitive, simply because
there are no non-trivial partitions on $2$ points, and we normally do not want
the trivial group of degree~$2$ to be primitive. For degree greater than $2$,
the assumption of transitivity is unnecessary.

Note that any permutation group property defined in this way is closed under
taking overgroups: if $G\le H$, and $H$ preserves a non-trivial $C$-structure,
then so does $G$.

The study of finite primitive permutation groups goes back to Galois. More
recently, an important tool in their study (part of which was known to Jordan
in the 19th century) is the \emph{O'Nan--Scott theorem}. It is convenient to
separate this theorem into two parts, by means of another definition.

The \emph{Hamming graph} $H(n,q)$ is the graph whose vertex set consists of all
words of length $n$ over an alphabet $Q$ of size $q$, two vertices being 
adjacent if the words agree in all positions except one. It is non-trivial if
$n,q>1$. Now we say that a primitive permutation group is \emph{basic} if it
preserves no non-trivial Hamming graph.

Now the first part of the O'Nan--Scott theorem describes the \emph{socle} (the
product of the minimal normal subgroups) of a group which is primitive but not
basic. The groups here include ordinary and twisted wreath products, and affine
groups whose linear part is an imprimitive linear group. We will see in
Corollary~\ref{c:sync} why these do not concern us.

The second part is as follows:

\begin{theorem}
A basic primitive finite permutation group is affine, diagonal or almost simple.
\label{t:ons2}\qed
\end{theorem}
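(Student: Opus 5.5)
The plan is to run the standard O'Nan--Scott analysis on the socle $N$ of a basic primitive group $G$ on $\Omega$, and to show that every configuration other than the affine, diagonal and almost simple ones forces a non-trivial $G$-invariant Hamming graph.

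\textbf{Step 1: the shape of the socle.} We use the classical facts that a primitive group has at most two minimal normal subgroups. Each minimal normal subgroup is transitive. If there are two, they are regular, isomorphic, non-abelian, and each centralizes the other.

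A minimal normal subgroup is characteristically simple, so $N\cong T^k$ for a simple group $T$. If $T$ is abelian, then $N\cong C_p^k$ is regular and abelian, so we may identify $\Omega$ with $N$. The point stabilizer $G_0$ then acts linearly on $\mathbb{F}_p^k$, and $G$ is affine. So assume from now on that $T$ is non-abelian, with $N=T_1\times\cdots\times T_k$. The group $G$ permutes the factors $T_i$ transitively by conjugation.

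\textbf{Step 2: the point stabilizer in $N$.} Fix $\alpha\in\Omega$ and put $N_\alpha$ and $\pi_i(N_\alpha)$, the projection of $N_\alpha$ to $T_i$. Since $G=NG_\alpha$, the group $G_\alpha$ permutes the factors transitively. Hence the $\pi_i(N_\alpha)$ are all conjugate under $G_\alpha$, and there are three cases.

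\begin{itemize}
\item[(a)] $\pi_i(N_\alpha)=R<T_i$ is a proper subgroup. Then $\Omega$ can be identified with $\Delta^k$, where $\Delta=T/R$. Explicitly, $N_\alpha\le R^k$, and maximality of $G_\alpha$ forces $N_\alpha=R^k$. The group $G$ preserves the product structure, since it acts on the coordinate set via the permutation action on the factors. So $G$ preserves the Hamming graph $H(k,|\Delta|)$, which is non-trivial if $k>1$. Basicness therefore forces $k=1$. In that case $T\trianglelefteq G\le\Aut(T)$, so $G$ is almost simple. (If $R=1$ and $k=1$, then $N=T$ is regular; this is excluded by the standard argument that $C_G(T)$ would then be a second minimal normal subgroup, which is case (c).)
\item[(b)] $\pi_i(N_\alpha)=T_i$. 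Then $N_\alpha$ is a subdirect product of full diagonal subgroups over a $G_\alpha$-invariant partition of $\{1,\dots,k\}$ into blocks of size $m$. If there are $\ell>1$ blocks, then $\Omega\cong (T^{m-1})^\ell$, and $G$ preserves $H(\ell,|T|^{m-1})$, so basicness forces $\ell=1$. With a single block, $N_\alpha$ is a full diagonal of $T^k$, and $G$ is of diagonal type.
\item[(c)] $N_\alpha=1$, so $N$ is regular. If $N$ is the unique minimal normal subgroup, the twisted wreath analysis identifies $\Omega$ with $T^k$ carrying a product structure via $G_\alpha$. For basic groups this again gives a Hamming graph, so this case is impossible. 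If there are two minimal normal subgroups $T^k$, then $G$ is a diagonal-type (or holomorph) group. When $k>1$ one again gets a product of smaller blocks and a Hamming graph. When $k=1$ it is the diagonal group with two factors.
\end{itemize}

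\textbf{Main obstacle.} The delicate part is case (c) with a unique regular non-abelian socle (the twisted wreath case), together with verifying in cases (a) and (b) that the product identification really is $G$-equivariant and not merely $N$-equivariant.

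For case (c), I would first show that $G_\alpha$ embeds in a wreath product $\Aut(T_1)_\beta\wr S_k$ via its action on the factors. I would then exhibit the Hamming graph directly: two elements of $N$ are adjacent if their quotient lies in a single $T_i$. This adjacency is invariant under $N$ (by translation) and under $G_\alpha$ (by conjugation permuting the $T_i$). That gives the Hamming graph without completing the full twisted wreath classification.

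The same definition of adjacency, namely "differing in one coordinate block", handles the $G$-invariance in all the cases above.
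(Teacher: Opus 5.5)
The paper gives no proof of this statement: it quotes it as the second half of the O'Nan--Scott theorem. The first half, describing socles of non-basic groups, is also cited rather than proved. Your proposal is therefore a genuinely different route, a self-contained reconstruction of the standard socle analysis, and it is essentially correct.

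What it gains over the citation comes from the paper's \emph{definition} of basic as the absence of an invariant non-trivial Hamming graph. You only ever have to exhibit such a graph, never identify the non-basic groups. In particular the twisted wreath case collapses exactly as you suggest. For any regular normal subgroup $N=T^k$, $G_\alpha$ acts on $\Omega\cong N$ by conjugation, and $\Aut(T^k)$ permutes the factors $T_i$. Hence $\Cay(T^k,\bigcup_iT_i\setminus\{1\})\cong H(k,|T|)$ is $G$-invariant. This holds whether $N$ is the unique minimal normal subgroup or one of two, so neither the bound $k\ge 6$ nor Schreier's conjecture is needed.

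The remaining group theory is just two facts. First, the maximality argument giving $N_\alpha=R^k$: $G_\alpha$ normalizes $\prod_i\pi_i(N_\alpha)$, so $G_\alpha R^k$ is $G_\alpha$ or $G$, and the latter gives $N=N_\alpha R^k=R^k$ by Dedekind's law. Second, Scott's lemma: a subgroup of $T^k$ projecting onto each factor is a \emph{direct} (not ``subdirect'') product of full diagonals over a partition of the factors.

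The $G$-equivariance you flag in (a) and (b) is routine. $G_\alpha$ acts on the cosets $N_\alpha x$ by $N_\alpha x\mapsto N_\alpha x^g$ and permutes the subgroups $T_BN_\alpha$, where $T_B$ is the product of the factors in a coordinate block $B$. Hamming adjacency is exactly $yx^{-1}\in T_BN_\alpha\setminus N_\alpha$ for some $B$. What the citation gains, conversely, is the finer structural information about the non-basic types, which the paper does not need here.

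Three small repairs are needed.

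\textbf{The regular simple socle.} Your parenthetical in (a) is wrong as justification. If $k=1$ and $T$ is regular, $C_G(T)$ may well be trivial, so it need not be a second minimal normal subgroup. Your case (c) does not catch this either, since one coordinate gives no non-trivial Hamming graph. But nothing needs excluding. If $C_G(T)=1$ then $T\le G\le\Aut(T)$ and $G$ is almost simple whatever $T_\alpha$ is. If $C_G(T)\ne1$ you are in the two-factor diagonal case.

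\textbf{Case (a) with $k=1$, $R\ne1$.} Say why $C_G(T)=1$: $T$ is not regular, hence is the unique minimal normal subgroup, and $C_G(T)\cap T=1$.

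\textbf{Choice of $N$.} In Step 1 take $N$ to be a minimal normal subgroup rather than the socle. With two minimal normal subgroups the socle is not regular, and $G$ need not be transitive on its simple factors.
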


Here an \emph{affine group} is a group of the form
\[G=\{x\mapsto x^\sigma A+c:c\in F^n,\sigma\in\Aut(F),A\in H\}\]
of permutations of the vector space $F^n$ (where $F$ is a finite field), with
$H$ a group of linear transformations of $F^n$; we see that $G$ is primitive
if and only if $H$ is an irreducible linear group (that is, preserves no
non-zero proper subspace), and basic if and only if $H$ is a primitive linear
group (that is, an irreducible group which preserves no direct sum decomposition
of $F^n$ into more than one subspace). By restriction of scalars, we may assume
that $F$ is a field of prime order; then we don't have to worry about field
automorphisms.

A \emph{diagonal group} is a group whose socle is of the form $T^{d+1}$, where
$T$ is a non-abelian finite simple group, acting by right multiplication on the
right cosets of the diagonal subgroup $\{(g,g,\ldots,g):g\in T\}$; the other
generators of the group are automorphisms of $T$, acting in the same way on all
factors, and elements of the symmetric group $S_{d+1}$ permuting the factors.

An \emph{almost simple group} is a group $G$ satisfying $T\le G\le\Aut(T)$,
where $T$ is a non-abelian finite simple group. (We know from the Classification
of Finite Simple Groups that $\Aut(T)/T$ is very small compared to $T$.)

Except in very small dimensions, affine and diagonal groups can be recognised
by geometric structures that they preserve. For affine groups with $n>1$,
these are the \emph{affine spaces}, whose points are the vectors in $F^n$ and
whose subspaces are the cosets of vector subspaces. For diagonal groups with
$d>1$, they are the \emph{diagonal semilattices} studied in \cite{bcps}.

An important result on almost simple primitive groups, conjectured by the
author, was proved by Tim Burness with a number of different coauthors
\cite{b1,b2,b3,b4}. A
\emph{base} for a permutation group is a sequence of points of the domain
$\Omega$ whose pointwise stabilizer is the identity. Note that if $G$ has a
base of size $b$, then its order is at most $n^b$, where $n$ is the degree
$|\Omega|$.

\begin{theorem}
Let $G$ be an almost simple primitive permutation group. Then one of the
following is true:
\begin{enumerate}
\item $G$ is a symmetric or alternating group, acting on an orbit of subsets
or partitions of its ``natural'' domain;
\item $G$ is a classical group, acting on an orbit of subspaces or pairs of
subspaces of complementary dimension in its ``natural'' module;
\item $G$ has a base of size at most $7$ (with equality only for the Mathieu
group $M_{24}$). \qed
\end{enumerate}
\label{t:base} 
\end{theorem}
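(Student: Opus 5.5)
This statement is Cameron's base size conjecture. The survey quotes it as a proved result (hence the \qed), so here I only sketch how the proof in the literature \cite{b1,b2,b3,b4} goes; I do not propose a new argument. The primitive groups in (1) and (2) are called \emph{standard}, and the aim is to show that every non-standard almost simple primitive group $G$ has base size $b(G)\le 7$. The basic tool is probabilistic. Fix $c$. If $c$ points of $\Omega$ chosen independently and uniformly fail to form a base, then some element $x$ of prime order fixes all of them. The probability of this is at most
\[
Q(G,c)=\sum_{x\in\mathcal P}\mathrm{fpr}(x)^c,
\]
where $\mathcal P$ is the set of elements of prime order in $G$ and $\mathrm{fpr}(x)$ is the fixed point ratio of $x$ on $\Omega$. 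Grouping the sum by $G$-classes, a class $x^G$ contributes $|x^G|\,\mathrm{fpr}(x)^c$. So it suffices to prove that $Q(G,c)<1$ for $c=7$, and for smaller $c$ in most families.

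The key input is an upper bound on fixed point ratios for non-standard actions, of the form $\mathrm{fpr}(x)<|x^G|^{-\epsilon}$ for some absolute $\epsilon>0$. I would take these bounds from the Liebeck--Saxl and Liebeck--Shalev results, together with Burness's sharper fixed point ratio estimates for classical groups. Substituting gives
\[
Q(G,c)\le\sum_{\text{classes } C}|C|^{1-c\epsilon}.
\]
This is small once $c\epsilon>1$ by a comfortable margin, using standard bounds on the number of conjugacy classes of prime order elements and on class sizes. The argument splits by family, using the Classification of Finite Simple Groups.
\begin{enumerate}
\item Alternating and symmetric groups in non-standard actions: here the point stabilizers are primitive on $\{1,\dots,n\}$ or otherwise small, so fixed point ratios are tiny and $b\le 2$ apart from small exceptions.
\item Classical groups in non-subspace actions: the stabilizers come from Aschbacher classes $\mathcal C_2$ to $\mathcal C_8$. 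Burness's fixed point ratio bounds give $b\le 5$, with the small cases treated individually.
\item Exceptional groups of Lie type: fixed point ratio bounds for parabolic and non-parabolic actions give $b\le 6$.
\item Sporadic groups: computation in \textsf{GAP}/\textsf{Magma}, using character tables to obtain fixed point ratios exactly. This is where $M_{24}$ acting on octads is found to need $b=7$ exactly.
\end{enumerate}

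The main obstacle is getting uniform constants good enough to reach $7$ rather than just some bounded number. The probabilistic bound is crude in small rank and for small fields. Those groups, together with the case of equality, require explicit and often computer-assisted base constructions, for example random search followed by verification. For the ``equality only for $M_{24}$'' part, one must show $Q(G,6)<1$ in every non-standard case except $M_{24}$. In each exceptional case one must either exhibit a base of size at most $6$ or exhaustively verify that none exists.
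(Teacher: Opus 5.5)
The paper gives no proof of this theorem; it states it and attributes it to \cite{b1,b2,b3,b4}. Your outline follows those papers. It uses the union bound $Q(G,c)=\sum|x^G|\,\mathrm{fpr}(x)^c$ over classes of elements of prime order, Burness's fixed point ratio estimates giving $b\le5$ for non-standard classical groups and $b\le6$ for exceptional groups, a separate treatment of non-standard actions of $S_n$ and $A_n$, and computation for the sporadic groups. However, one claim in your sketch is false, and it is the claim that carries the equality clause of the statement.

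The unique case of equality, established in \cite{b3}, is $M_{24}$ in its natural $5$-transitive action on $24$ points. It is not the action on the $759$ octads, which has a smaller base size. You can check $b=7$ by hand:
\begin{itemize}
\item Any $5$ points lie in a unique octad $O$. By $5$-transitivity their pointwise stabilizer has order $|M_{24}|/(24\cdot23\cdot22\cdot21\cdot20)=48$.
\item This stabilizer is $2^4{:}3$. Here $2^4$ fixes $O$ pointwise and acts regularly on the $16$ points off $O$, and the $3$ permutes the other three points of $O$.
\item A sixth point in $O$ leaves the subgroup $2^4$, since $A_8$ has no non-identity element fixing $6$ points. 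A sixth point off $O$ leaves a subgroup of order $3$.
\item So no $6$ points form a base, while a seventh point moved by that element of order $3$ completes one.
\end{itemize}
Your final step should therefore be to show $b(G)\le6$ for every non-standard $G$ other than $M_{24}$ on $24$ points. This includes all the other primitive actions of $M_{24}$, which your phrase ``every non-standard case except $M_{24}$'' wrongly excludes.

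A smaller point concerns the classical case. The non-subspace point stabilizers include the almost simple irreducible subgroups of Aschbacher's class $\mathcal{S}$, not only those in $\mathcal{C}_2$--$\mathcal{C}_8$. Burness's fixed point ratio bounds cover these as well.
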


Type (c) are ``small'' groups, with polynomially bounded order. Types (a) and
(b) are ``large'' groups. I will say more about types (a) and (b) later in the
paper.

\section{Synchronization}

The topic of synchronization comes from automata theory. The simplest type of
deterministic automaton is a machine which has a set $\Omega$ of internal
states; when a symbol from an alphabet $A$ is input to the machine, its
internal state may change in a definite fashion. In other words, for every
$a\in A$, there is a \emph{transition}, a map $t_a:\Omega\to\Omega$ so that,
if $a$ is input while the machine is in state $\alpha$, it moves to state
$\alpha t_a$. (We write maps on the right). The machine can read the symbols
from a word in $A$ one after the other, and applies the transitions
sequentially. An automaton is \emph{synchronizing} if there is a word $w$ in
the alphabet such that, after reading $w$, the automaton is in a state
depending only on $w$ and not on its initial state; the word $w$ is called a
\emph{reset word}.

Here is an example. The automaton has four states $1,2,3,4$, and the alphabet
has two symbols represented by \R\ and \B.

\begin{center}
\setlength{\unitlength}{1mm}
\begin{picture}(50,40)
\thicklines
\multiput(25,10)(0,30){2}{\circle*{2}}
\multiput(10,25)(30,0){2}{\circle*{2}}
\put(24,35){$1$}
\put(13,24){$2$}
\put(24,13){$3$}
\put(35,24){$4$}
\color{red}
\multiput(25,10)(-15,15){2}{\line(1,1){15}}
\multiput(25,10)(15,15){2}{\line(-1,1){15}}
\put(18,32){$\swarrow$}
\put(16,18){$\searrow$}
\put(30,18){$\nearrow$}
\put(28,32){$\nwarrow$}
\color{blue}
\curve(10,25,16,34,25,40)
\put(14,36){$\swarrow$}
\curve(10,25,5,23.5,3.5,25,5,26.5,10,25)
\curve(25,10,23.5,5,25,3.5,26.5,5,25,10)
\curve(40,25,45,23.5,46.5,25,45,26.5,40,25)
\end{picture}
\end{center}
It can be verified that \B\R\R\R\B\R\R\R\B\ is a reset word, mapping every 
state to state $2$; in fact it is the shortest reset word.

A similar construction using a regular $m$-gon in place of a square gives an
$m$-state automaton with a reset word of length $(m-1)^2$. 

Much of the research on synchronizing automata has been driven by the
celebrated \emph{\v{C}ern\'y conjecture}~\cite{cerny}, which states that any
$m$-state synchronizing automaton has a reset word of length at most $(m-1)^2$
(see also \cite{volkov}). At the conference, there were talks by
Antoni Wi\'sniewski and Emanuele Rodaro on topics connected to synchronizing
automata. But I will take a different direction.

We can translate the problem into semigroup theory. The maps generated by the
transitions of the automaton form a transformation monoid $M$ on the set
$\Omega$ of states; the automaton is synchronizing if and only if $M$ contains
an element of \emph{rank}~$1$ (one whose image has cardinality~$1$). So the
\v{C}ern\'y conjecture concerns transformation monoids with a given set of
generators, and we may assume without loss of generality that the generating
set is minimal.

Note that the examples meeting the bound have one of their transitions being a
cyclic permutation. This may have suggested to Ben Steinberg and Jo\~ao
Ara\'ujo that group theory might play a role. The transitions which are
permutations generate a permutation group, which is the group of units of the
transformation monoid. Accordingly, they suggested, take a permutation group
$G$ and look at those transitions $t$ for which $\langle G,t\rangle$ is a
synchronizing monoid. Since a generating set for a transformation monoid must
include a generating set for the group of units, we may assume that the set of
generators which are not permutations is minimal, and begin the investigation
by assuming that there is just one such generator.

Since a permuation group of degree greater than $1$ cannot be synchronizing as
a monoid unless $|\Omega|=1$, we will take over the term and apply it to the
group with a different but related meaning. Thus, we define a permutation
group $G$ to be \emph{synchronizing} if, for any transformation $t$ which is
not a permutation, $\langle G,t\rangle$ contains an element of rank~$1$; in
other words, $G$ \emph{synchronizes} every non-permutation.

It was realised early on that synchronizing groups must be primitive (I will 
give a proof later). The idea that synchronization might be equivalent to
primitivity was refuted by Peter Neumann~\cite{neumann}. He showed that $G$
is non-synchronizing if and only if, for some $k$ with $1<k<|\Omega|$, there
is a $k$-subset $A$ of $\Omega$ and a $k$-part partition $P$ of $\Omega$
such that, for every $g\in G$, the set $Ag$ is a transversal for $A$. (He
called $P$ a \emph{section-regular partition} for $G$.)

To see this, suppose that $\langle G,t\rangle$ is not a synchronizing monoid,
and (without loss of generality) that $t$ has minimal rank in this monoid.
Let $A$ be the image of $t$, and $P$ its kernel (the partition of $\Omega$ 
into inverse images of the points in $A$). If $Ag$ is not a transversal for
$P$, then $|Agt|<|Ag|$, contrary to assumption. Conversely. if $P$ is a 
section-regular partition with section $A$, and $t$ the map taking each part
$B$ of $P$ to the unique point of $A\cap B$, then $\langle G,t\rangle$ is a
non-synchronizing monoid.

In the next section, I will show a definition of ``synchronizating'' in the
form of our earlier definitions of primitive, basic, etc.

\section{The characterization}

Let $\Gamma$ be a simple undirected loopless graph. A \emph{clique} in $A$ is
a set of vertices with every pair adjacent; the \emph{clique number}
$\omega(\Gamma)$ of the graph is the maximum size of a clique. An
\emph{independent set} is a set of vertices containing no edges; the
\emph{independence number} $\alpha(\Gamma)$ is the maximum size of an 
independent set. A (proper) \emph{colouring} of $\Gamma$ is an assignment
of labels or ``colours'' to the vertices so that adjacent vertices have
different colours; equivalently, a partition of the vertex set into independent
subsets. The \emph{chromatic number} $\chi(\Gamma)$ is the least number of
colours required for a colouring of $\Gamma$.

Clearly $\omega(\Gamma)\le\chi(\Gamma)$, since the vertices of a clique must
all have different colours in any proper colouring. A graph $\Gamma$ is said
to be \emph{weakly perfect} if $\omega(\Gamma)=\chi(\Gamma)$. Note that a graph
$\Gamma$ with a clique of size $m$ and a colouring with $m$ colours is weakly
perfect, since
\[m\le\omega(\Gamma)\le\chi(\Gamma)\le m.\]

\begin{theorem}
A permutation group $G$ on $\Omega$ is synchronizing if and only if there is
no non-trivial weakly perfect graph on the vertex set $\Gamma$ which is
preserved by $G$.
\label{t:wp}
\end{theorem}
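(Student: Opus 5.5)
The plan is to prove both directions through Neumann's characterization from the previous section: $G$ is non-synchronizing if and only if there is a $k$-subset $A$ and a $k$-part partition $P$, with $1<k<|\Omega|$, such that $Ag$ is a transversal of $P$ for every $g\in G$. The other ingredient is the standard fact that a graph endomorphism maps cliques injectively onto cliques.

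\emph{A non-trivial $G$-invariant weakly perfect graph gives non-synchronization.} Let $\Gamma$ be such a graph, with $\omega(\Gamma)=\chi(\Gamma)=m$. Non-triviality means $\Gamma$ is neither complete nor null, so $1<m<|\Omega|$.
\begin{enumerate}
\item Take a maximum clique $A$, so $|A|=m$, and a proper colouring with $m$ colours, giving a partition $P$ into $m$ independent sets.
\item Since $G\le\Aut(\Gamma)$, each $Ag$ is again a clique of size $m$.
\item A clique meets each colour class in at most one point. Having $m$ points spread over $m$ classes, $Ag$ meets each class exactly once, so $Ag$ is a transversal of $P$.
\end{enumerate}
Neumann's criterion with $k=m$ then shows $G$ is non-synchronizing. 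Alternatively, the map $t$ sending each colour class to its point of $A$ is an endomorphism of $\Gamma$, and every element of $\langle G,t\rangle$ maps $A$ injectively.

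\emph{Non-synchronization gives a non-trivial $G$-invariant weakly perfect graph.} Take $A$ and $P$ as in Neumann's criterion with $1<k<|\Omega|$. Define $\Gamma$ by joining $\alpha$ and $\beta$ whenever there is $g\in G$ with $\alpha g^{-1}$ and $\beta g^{-1}$ lying in different parts of $P$.
\begin{enumerate}
\item $\Gamma$ is $G$-invariant by construction: it is the union of the $G$-images of the complete multipartite graph defined by $P$.
\item $A$ is a clique of size $k$. Two distinct points of $A$ are separated by $P$, since $A$ is a transversal of $P$ (take $g=1$).
\item The parts of $P$ are independent sets, so $P$ is a proper $k$-colouring. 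Suppose $\alpha,\beta$ lie in the same part of $P$ and $\{\alpha,\beta\}$ were an edge. Then for some $h$ the points $\alpha h,\beta h$ lie in different parts of $P$; equivalently, $P$ and $Ph^{-1}$ disagree on this pair. Choose a maximal clique containing $\alpha h,\beta h$ and extend it to a $k$-clique $Bh$, and look for a contradiction with the transversal property.
\end{enumerate}

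\textbf{Main obstacle.} Step 3 above does not close easily. A cleaner choice of graph is needed: join $\alpha$ and $\beta$ if and only if no element of the monoid $M=\langle G,t\rangle$ collapses them, where $t$ is the minimal-rank map with image $A$ and kernel $P$. This graph is $G$-invariant, because if $m\in M$ collapses $\{\alpha,\beta\}$ then $g^{-1}m\in M$ collapses $\{\alpha g,\beta g\}$. Every element of $M$ has rank at least $k$, hence exactly $k$ after composing with $t$, so every element of $Mt$ is injective on the image $A$. It follows that $A$ is a clique. Each kernel class of $t$ is independent, since $t\in M$ collapses it. So $\omega=\chi=k$, and non-triviality follows from $1<k<|\Omega|$. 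This monoid-based graph is the construction I would actually use; the only point needing care is the collapsing argument for the clique $A$.
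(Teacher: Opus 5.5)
Your final argument, using the monoid graph, is correct. The first direction is the paper's own: a minimum colouring is section-regular for a maximum clique. For the converse you use a different graph from the paper.

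The paper takes the smallest $G$-invariant graph in which $A$ is a clique, namely all pairs $\{a_1g,a_2g\}$ with $a_1\ne a_2$ in $A$ and $g\in G$. Every edge lies in some $Ag$, which is a transversal of $P$, so $P$ is a proper $k$-colouring in one line and no monoid is needed.

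You take instead the graph of pairs collapsed by no element of $M=\langle G,t\rangle$. This graph contains the paper's graph: if $m$ collapsed $\{a_1g,a_2g\}$, then $gm$ would collapse $\{a_1,a_2\}$. Its verification rests on every element of $M$ having rank at least $k$. That is free if $t$ was chosen of minimal rank in a non-synchronizing monoid. If you start from Neumann's section-regular partition, it needs a short induction on word length. What you gain is a canonical graph depending only on $M$, on which every element of $M$ acts as an endomorphism. You also get the general fact that this graph has $\omega=\chi$ equal to the minimal rank of $M$, which is the reverse-direction counterpart of the paper's remark about endomorphism monoids.

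Two small points. First, $A$ is a clique most simply because if $m\in M$ collapsed two points of $A=\Omega t$, then $tm\in M$ would have rank below $k$. Your route via $Mt$ needs the rank of $tmt$, not merely the rank of $mt$. Second, your abandoned first graph cannot be rescued. The parts of $P$ are independent in the union of the $G$-images of the complete multipartite graph on $P$ only if $P$ refines, hence equals, every $Pg$, and this fails whenever $G$ is primitive. Taking $G$-images of the complete graph on $A$ instead is exactly the paper's construction.
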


\begin{proof}
If $\Gamma$ is weakly perfect and $G$-invariant, then the partition of 
$\Omega$ into colour classes in a minimum colouring is section-regular for a 
maximum clique.

Conversely, suppose that $G$ is not synchronizing, and let $P$ be a
section-regular partition with section $A$. Form a graph by joining all pairs
of vertices in $A$ and then including all the images of these edges under
$G$. Then every edge joins vertices in different parts of $P$,  so $P$ defines
a proper colouring, and $\Gamma$ is weakly perfect and $G$-invariant.\qed
\end{proof}

I remark that, if $G$ preserves a weakly perfect graph $\Gamma$, then a
corresponding non-synchronizing monoid with $G$ as group of units is given
concretely by the endomorphism monoid of $\Gamma$.

\medskip

We apply this theorem to show:

\begin{cor}
A synchronizing permutation group is primitive and basic.
\label{c:sync}
\end{cor}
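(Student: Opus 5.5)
By Theorem~\ref{t:wp}, it suffices to show two things. If $G$ is imprimitive, then $G$ preserves a non-trivial weakly perfect graph. If $G$ is primitive but not basic, then $G$ also preserves a non-trivial weakly perfect graph. In each case the plan is to take the invariant structure witnessing the failure of the property and read off a graph that is visibly $G$-invariant. We then exhibit a clique and a colouring of the same size, and invoke the remark preceding Theorem~\ref{t:wp} that this forces $\omega=\chi$.

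\emph{Imprimitive case.} First suppose $G$ is intransitive, with $|\Omega|>2$. If the only non-trivial invariant partition is into orbits, take one orbit $O$ with $1<|O|$ or with $|O|<|\Omega|-1$; we may assume some orbit is not the whole set. A convenient choice is the complete multipartite graph whose parts are the $G$-orbits, or, if there are only two orbits, one of size $1$, the star. These are handled exactly like the transitive case below. Now suppose $G$ preserves a non-trivial partition $P$ of $\Omega$ into $m$ blocks of size $k$, where $1<k$ and $m>1$ (for transitive $G$ the blocks are equal in size). Let $\Gamma$ be the complete multipartite graph with the blocks of $P$ as parts, so two points are adjacent if and only if they lie in different blocks. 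Then $\Gamma$ is $G$-invariant, since $G$ permutes the blocks. It is non-trivial: it is not null because $m>1$, and not complete because $k>1$. A transversal of $P$ is a clique of size $m$, and colouring each point by its block is a proper colouring with $m$ colours. Hence $\Gamma$ is weakly perfect. (In the intransitive case with unequal orbit sizes the same argument works verbatim: the parts need not be equal for the clique/colouring argument.)

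\emph{Non-basic case.} Suppose $G$ preserves a non-trivial Hamming graph, so we may identify $\Omega$ with $Q^n$, where $n,q>1$, and $G$ preserves the Hamming graph $H(n,q)$ and therefore its distance function. The natural candidate is the graph $\Gamma$ in which two words are adjacent if they differ in every coordinate, that is, the distance-$n$ graph. It is $G$-invariant because $G$ preserves Hamming distance. It is non-trivial: it is not null because $q>1$, and not complete because $n>1$ allows two words at distance $1<n$. For a clique, take the $q$ constant words $(a,a,\ldots,a)$ with $a\in Q$; these pairwise differ in all positions. For a colouring, colour each word by its first coordinate. Two adjacent words differ in the first coordinate, so this is a proper colouring with $q$ colours. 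Hence $\omega(\Gamma)=\chi(\Gamma)=q$ and $\Gamma$ is weakly perfect. Theorem~\ref{t:wp} then shows $G$ is not synchronizing.

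The only step I expect to need care is the justification that $G$ preserving $H(n,q)$ means $G$ preserves Hamming distance, and hence the distance-$n$ graph. This holds because any graph automorphism preserves graph distance, and in $H(n,q)$ the graph distance equals the Hamming distance. Everything else is the direct clique/colouring check above.
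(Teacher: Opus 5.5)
Your argument is correct, but it uses different graphs from the paper's.

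\emph{Imprimitive case.} The paper takes the disjoint union of complete graphs on the blocks of a non-trivial invariant partition, where clique number and chromatic number both equal the largest block size. You take its complement, the complete multipartite graph, where both equal the number of blocks. Either construction works for any non-trivial invariant partition with blocks of arbitrary sizes. So your separate paragraph on intransitive groups is unnecessary, and somewhat garbled. Note also that the orbit partition is useless when $G=1$, since it is then discrete; but every partition is then invariant, so your general argument still covers that case.

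\emph{Non-basic case.} This is the genuine difference. The paper keeps the invariant graph $H(n,q)$ itself, takes a line as a $q$-clique, and colours a word by the sum of its coordinates. That colouring needs an abelian group structure on the alphabet. The paper says ``finite field of order $q$'', but $\mathbb{Z}/q\mathbb{Z}$ is what is actually needed when $q$ is not a prime power. You instead pass to the distance-$n$ graph, which is invariant because automorphisms of $H(n,q)$ preserve Hamming distance. There the constant words give a $q$-clique and the first coordinate gives a proper $q$-colouring.

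Your route needs no algebra on the alphabet at all, at the cost of the one-line distance-preservation remark. The paper's route has the mild advantage of showing that the Hamming graph itself is weakly perfect.
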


\begin{proof}
If $G$ is imprimitive, then it preserves the disjoint union of  complete
graphs on the parts of a non-trivial $G$-invariant partition; this graph
is clearly weakly perfect.

If $G$ is non-basic, then it preserves a Hamming graph $H(n,q)$, whose
vertices are all $n$-tuples over an alphabet of size $q$. The set of vertices
with fixed values in all but the last coordinate is a clique of size $q$. We
take the alphabet to be the finite field of order $q$, and also take this field
as the set of colours. Give to the vertex $(a_1,a_2,\ldots,a_n)$ the colour
$(a_1+a_2+\cdots+a_n)$. If two vertices are adjacent, they differ in
exactly one coordinate, and so their colours are different. Thus the clique
number and chromatic number are both $q$.\qed
\end{proof}

I remark in passing that Neumann's Example~3.1 in~\cite{neumann} of a primitive
but not synchronizing group is non-basic: it is the automorphism group of
$H(2,3)$.

\section{Separation}

We require also a related concept called separation. Suppose that $G$ is a
transitive permutation group on $\Omega$, and let $A$ and $B$ be subsets of
$\Omega$. Then the average size of $Ag\cap B$ for $g\in G$ is equal to
$|A|\cdot|B|/|\Omega|$. To see this, count triples $(a,b,g)$ such that
$a\in A$, $b\in B$,  $g\in G$, and $ag\in B$. For each point of $A$, there
are $|G|/|\Omega|$ choices of an element of $G$ mapping it to any given point;
so the number of such triples is $|A|\cdot|B|\cdot|G|/|\Omega|$. Dividing by
$|G|$ proves the claim.

In particular, we see that, if $|A|\cdot|B|=|\Omega|$, then the average
number is $1$. So either
\begin{enumerate}
\item for every $g\in G$, we have $|Ag\cap B|=1$; or
\item there exists $g\in G$ with $Ag\cap B=\emptyset$.
\end{enumerate}
If the second alternative holds for all choices of $A$ and $B$ with
$|A|\cdot|B|=|\Omega|$ and $|A|,|B|>1$ (and $G$ is transitive), we say that
$G$ is \emph{separating}.

\begin{theorem}
Let $G$ be a transitive permutation group on $\Omega$.
\begin{enumerate}
\item If $G$ is separating, then it is synchronizing.
\item $G$ is separating if and only if there is no non-trivial graph $\Gamma$
with $\omega(\Gamma)\chi(\Gamma)=|\Omega|$ preserved by $G$.
\end{enumerate}
\end{theorem}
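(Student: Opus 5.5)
The plan is to use the averaging identity of this section throughout: for transitive $G$ and $A,B\subseteq\Omega$, the mean of $|Ag\cap B|$ over $g\in G$ is $|A|\cdot|B|/|\Omega|$. From it I first extract a standard lemma. If $\Gamma$ is $G$-invariant, $A$ is a clique and $B$ is an independent set, then each $Ag$ is again a clique, so $|Ag\cap B|\le 1$ for every $g$. Averaging gives $\omega(\Gamma)\alpha(\Gamma)\le|\Omega|$. Since colour classes are independent, also $\chi(\Gamma)\ge|\Omega|/\alpha(\Gamma)\ge\omega(\Gamma)$.

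For (a), I argue by contraposition using Theorem~\ref{t:wp}. Suppose $G$ is not synchronizing, so it preserves a non-trivial weakly perfect graph $\Gamma$; put $m=\omega(\Gamma)=\chi(\Gamma)$.
\begin{itemize}
\item In a colouring with $m$ colours, every class has size at most $\alpha(\Gamma)\le|\Omega|/m$, while the $m$ class sizes sum to $|\Omega|$. So every class has size exactly $|\Omega|/m$.
\item Take a maximum clique $A$ and a colour class $B$. Then $|A|\cdot|B|=|\Omega|$, and $|A|,|B|>1$ because $\Gamma$ is neither complete nor null.
\item Each $Ag$ is a clique and $B$ is independent, so $|Ag\cap B|\le1$ for all $g$. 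As the average is $1$, we get $|Ag\cap B|=1$ for all $g$.
\end{itemize}
Thus $G$ is not separating.

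For (b), in the direction ``graph $\Rightarrow$ not separating'', take $\Gamma$ non-trivial and $G$-invariant with $\omega\chi=|\Omega|$. Let $A$ be a maximum clique. I then need an independent set $B$ with $|B|=\chi$, so that $|A|\cdot|B|=|\Omega|$; the argument of (a) then forces $|Ag\cap B|=1$ for all $g$. The lemma gives $\alpha\le|\Omega|/\omega=\chi$, so I must show $\alpha=\chi$, for example via a symmetry between cliques and independent sets. I expect this to be the main obstacle. I do not see why $\alpha=\chi$ should hold for an arbitrary invariant graph, so I would first test small vertex-transitive examples.

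For ``not separating $\Rightarrow$ graph'', take $A,B$ with $|A|\cdot|B|=|\Omega|$, $|A|,|B|>1$, and $|Ag\cap B|=1$ for all $g$. Let $\Gamma$ be the $G$-orbit closure of the complete graph on $A$. Every edge lies inside some $Ag$, so $B$ and all its translates are independent. By the lemma, $\omega=|A|$ and $\alpha=|B|$, and $\Gamma$ is non-trivial. What remains is to show $\chi=|B|$, that is, to find a colouring with $|B|$ colours. This is problematic, because $\chi\ge|\Omega|/\alpha=|A|$ forces $|A|\le|B|$, and we may choose which of the two sets carries the clique. The first thing I would try is the Neumann-style construction from Section~3: use the sets $Ag$, which are transversals to the translates of $B$, to build colourings.

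I am genuinely uncertain that (b) holds with $\chi$ as worded. The computation above points to $\omega(\Gamma)\alpha(\Gamma)=|\Omega|$ as the natural condition. If the checks in both directions fail, I would report (b) as needing that correction rather than claim a proof.
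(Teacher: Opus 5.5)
Your proof of (a) is correct and is essentially the paper's. The paper works directly with Neumann's section-regular partition: a part $B$ meets every $Ag$ exactly once, so averaging gives $|B|=|\Omega|/k$. You instead pass through Theorem~\ref{t:wp} and use $\omega(\Gamma)\alpha(\Gamma)\le|\Omega|$ to force every colour class to have size $|\Omega|/m$. The idea is the same.

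For (b), your suspicion is right, and the weakness of the proposal is that you stop at suspicion instead of settling it. As printed, with $\chi(\Gamma)$, the equivalence is false. The paper's own proof actually establishes the version with $\alpha(\Gamma)$ in place of $\chi(\Gamma)$: it builds a graph in which $A$ is a clique and $B$ is independent, and conversely takes ``a clique and an independent set, both of maximum size''.

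The implication that fails is ``not separating $\Rightarrow$ some non-trivial $G$-invariant graph has $\omega\chi=|\Omega|$''. Since $2\le\omega\le\chi$ for a non-trivial graph, on $6$ points such a graph would need $\omega=2$ and $\chi=3$. It would therefore be a triangle-free, non-bipartite, vertex-transitive graph on $6$ vertices. None exists: the non-null triangle-free vertex-transitive graphs on $6$ vertices are $3K_2$, $C_6$ and $K_{3,3}$, all bipartite. Yet $C_6$ acting regularly on $\mathbb{Z}_6$ is not separating, since $A=\{0,3\}$ and $B=\{0,1,2\}$ give $|(A+g)\cap B|=1$ for all $g$.

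A primitive counterexample is $S_6$ on $2$-subsets, which is non-separating by Theorem~\ref{t:2sets}. It has rank~$3$, so its only non-trivial invariant graphs are the line graph of $K_6$ and the Kneser graph $K(6,2)$. The line graph has $\omega=5$, so $\omega\chi\ge25$. The Kneser graph has $\omega=3$ and $\chi=4$, so $\omega\chi=12$. Neither equals $|\Omega|=15$.

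The corrected statement follows at once from what you already have.
\begin{enumerate}
\item If $(A,B)$ witnesses non-separation, your orbit-closure graph has $\omega=|A|$ and $\alpha=|B|$, so $\omega\alpha=|\Omega|$.
\item Conversely, suppose a non-trivial invariant $\Gamma$ has $\omega\alpha=|\Omega|$. Take a maximum clique and a maximum independent set; both have size at least $2$ by non-triviality. They satisfy $|Ag\cap B|\le 1$ with average $1$, exactly as in your third bullet for (a).
\end{enumerate}
So you need not decide whether $\alpha=\chi$ in the direction where you were stuck. State (b) with $\alpha$, prove it by these two lines, and record a counterexample to the printed version.
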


\begin{proof}
(a) If $G$ is non-synchronizing, then there is a $k$-set $A$ and a 
$k$-partition $P$ such that, for all $g\in G$, $Ag$ is a transversal for $P$,
with $1<k<|\Omega|$. If $B$ is a part of $P$, then $|Ag\cap B|=1$ for all
$g\in G$; so $|B|=|\Omega|/k$. Then $A$ and $B$ show that $G$ is non-separating.

(b) Suppose that $A$ and $B$ witness non-separation. Let $\Gamma$ be the graph
whose edge set consists of the $G$-images of the pairs in $A$. Then $A$ is a
clique and $B$ an independent set, and $\Gamma$ is $G$-invariant. Conversely,
if a graph $\Gamma$ with the stated property exists, then a clique and an
independent set, both of maximum size, witness non-separation. \qed
\end{proof}

Both clique number and chromatic number are NP-hard to establish. However,
parametrized complexity gives us concepts which bear out the experience of
programmers that chromatic number is significantly harder than clique number.
(Finding clique size is in the complexity class $W[1]$,
but no $W[1]$ algorithm for chromatic number is known~\cite{bottesch}.)

So, if we are testing computationally whether a permutation group is
synchronizing, it is probably best to test first whether it is separating
(noting that independence number of a graph is just clique number of the
complement); only if this test fails do we need to go on and test 
synchronization. Also, as we will see, synchronization and separation are
equivalent for several classes of groups, including affine and diagonal groups.

Using these ideas and a great deal of ingenuity, Leonard Soicher~\cite{soicher}
has performed a remarkable computation to find all the synchronizing
permutation groups of degree up to $624$. This computation has been very
valuable in shaping our ideas about what to attempt to prove about
synchronizing groups.

Before I describe results for special types of groups, I record a general
result about synchronization and separation. We have seen that separation is
a stronger property. Examples are known to show that it is strictly stronger.
But there is a useful sufficient condition for the equivalence of these two
concepts. This theorem is taken from \cite[Section 3.2]{bccsz}.

Let $G$ be a transitive permutation group on $\Omega$. Suppose that $H$ is a
\emph{regular} subgroup of $G$: this means that $H$ is transitive and the
stabilizer of a point in $H$ is trivial. Then we can identify $\Omega$ with
$H$ by choosing a point $\alpha\in\Omega$ and identifying $h\in H$ with
$\alpha h\in\Omega$. With this identification, $H$ acts on $\Omega$ by right
multiplication.

\begin{theorem}
Let $G$ be a transitive permutation group on $\Omega$. Suppose that $\Omega$
can be identified with a group $H$ such that both right and left multiplication
on $H$ are induced by elements of $G$. If $G$ is synchronizing, then it is
separating.
\label{t:reg}
\end{theorem}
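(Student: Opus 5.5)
We prove the contrapositive: if $G$ is not separating, then it is not synchronizing. By the characterization of separation, non-separation gives a non-trivial $G$-invariant graph $\Gamma$ with $\omega(\Gamma)\alpha(\Gamma)=|\Omega|$. Concretely, we take subsets $A,B$ of $\Omega=H$ with $|A|\cdot|B|=|H|$ and $|A|,|B|>1$, such that $|Ag\cap B|=1$ for all $g\in G$. The aim is to construct from $A$ and $B$ a section-regular partition, or equivalently a proper colouring of $\Gamma$ with $|A|$ colours. Then $\Gamma$ is weakly perfect, and Theorem~\ref{t:wp} shows that $G$ is non-synchronizing.

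The first step uses only right multiplications. For every $h\in H$, right multiplication by $h$ lies in $G$, so $|Ah\cap B|=1$. Thus each $h\in H$ can be written as $h=a^{-1}b$ with $a\in A$, $b\in B$; this holds because $Ah\cap B\ne\emptyset$ exactly when $h\in A^{-1}B$. The representation is unique: two representations would give two elements of $Ah\cap B$. So the map $A\times B\to H$, $(a,b)\mapsto a^{-1}b$, is a bijection, consistent with $|A||B|=|H|$. Applying the same reasoning with the roles of $A$ and $B$ exchanged, or via inverses, we also get uniqueness of the form $b^{-1}a$ where needed.

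The second step uses left multiplications to build the colouring. We take as candidate partition the left translates $\{aB : a\in A\}$, or perhaps $\{Ba\}$ or $\{a^{-1}B\}$; the orientation must be fixed by the check below. Left multiplication by $a$ is induced by an element of $G$, and $G$ preserves $\Gamma$. Hence each translate of $B$ is an independent set of size $|B|$, because $B$ is independent (non-separation means $B$ meets every image of the clique $A$ in exactly one point). The parts are pairwise disjoint by the uniqueness from the first step: if $a_1b_1=a_2b_2$, then a suitable factorization argument forces $a_1=a_2$. Counting then shows that these $|A|$ sets of size $|B|$ cover $H$. We therefore have a proper colouring of $\Gamma$ with $|A|=\omega(\Gamma)$ colours, so $\Gamma$ is weakly perfect.

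The main obstacle is getting the sides right. Right multiplication gives a uniqueness statement for $a^{-1}b$, but the partition requires disjointness of the translates $xB$, which amounts to uniqueness of the form $a^{-1}b$ for the appropriate translates. So we should translate $B$ by the elements $a^{-1}$, $a\in A$, on the left, giving the sets $a^{-1}B$. These are pairwise disjoint and cover $H$ exactly by the bijection in the first step. Left multiplication by $a^{-1}$ lies in $G$ by hypothesis, so each $a^{-1}B$ is independent. I expect the only delicate point to be this matching of the bijection with the correct-sided translates. The role of the left multiplications is precisely to ensure that the translates, which tile $H$ by the right-multiplication argument, are images of $B$ under $G$ and hence independent.
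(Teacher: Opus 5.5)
Your argument is correct, and it differs from the paper's in how the left multiplications are used. Both proofs start from the same exact factorization: right multiplications give $|Ah\cap B|=1$ for all $h$, so every element of $H$ is uniquely $a^{-1}b$ (the paper's Claim~1).

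The paper then writes $\Gamma=\Cay(H,S)$. It uses invariance under left multiplication to show that $S$ is closed under conjugation, deduces that $A^{-1}$ is also a clique, and applies Claim~1 a second time to get the exact factorization $H=AB$. It then takes the partition $\{Ab:b\in B\}$ into right translates of $A$. These parts are cliques, with $B$ as the section, so in effect this is a colouring of the complement of $\Gamma$.

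You instead read off the other partition contained in $H=A^{-1}B$, namely the left translates $a^{-1}B$ for $a\in A$. You use the left multiplications directly as elements of $G$ to see that these are images of $B$, hence independent. This gives a proper $|A|$-colouring of $\Gamma$ itself, with $A$ as the section.

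Your route is shorter: it needs no connection set, no conjugation argument, and only one application of the factorization step. You could even skip the graph. If $\lambda\in G$ induces $x\mapsto a^{-1}x$, then $|Ag\cap a^{-1}B|=|Ag\lambda^{-1}\cap B|=1$, so $\{a^{-1}B:a\in A\}$ is directly a section-regular partition with section $A$. The paper's route gives the extra symmetric information that $A^{-1}$ is a clique and that $H=AB$ exactly.

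There are two small points to tidy. First, your parenthetical reason that $B$ is independent needs $\Gamma$ to be the graph whose edges are the $G$-images of pairs inside $A$, so that every edge lies in some $Ag$. For an arbitrary $\Gamma$ from the characterization, take $A$ and $B$ to be a maximum clique and a maximum independent set instead. Second, drop the hedging in your second paragraph. The translates $aB$ would need uniqueness of products $ab$, which your first step does not provide; only the choice $a^{-1}B$, which you eventually settle on, is justified by it.
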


\begin{proof} Suppose that $G$ satisfies the conditions of the theorem.
Identify $\Omega$ with $H$ as explained above. Assume that $G$ is
non-separating, and let $A$ and $B$ be subsets of $H$ witnessing
non-separation. Note that any $G$-image of $A$, with $B$, also witnesses
non-separation. To keep the argument below simple, I use $A$ to mean ``any
$G$-image of $A$''.

\paragraph{Claim 1:} $H$ has an exact factorization by
$A^{-1}=\{a^{-1}:a\in A\}$ and $B$; that is, every element $h\in H$ has a 
unique expression in the form $a^{-1}b$ for $a\in A$, $b\in B$. For, given 
$h\in H$, there is a unique element $b\in Ah\cap B$, and so $h=a^{-1}b$ with
$a\in A$ and $b\in B$; and the expression is unique.

\paragraph{Claim 2:} If $H$ has an exact factorization by $A$ and $B$, then
$G$ is non-synchronizing. We claim that $P=\{Ab:b\in B\}$ is a section-regular
partition with section $B$. The number of sets $Ab$ is equal to $|B|=|H|/|A|$,
and their pairwise disjointness follows from the exact factorization.

\medskip

Now we can conclude the proof. There is a $G$-invariant graph $\Gamma$ on
$H$ in which $A$ is a clique and $B$ an independent set. Since the right action
of $H$ is regular, $\Gamma$ is a Cayley graph for $H$; say $\Gamma=\Cay(H,S)$
for some inverse-closed subset $S$. Since $\Gamma$ admits both the right and
the left actions of $H$, the set $S$ is closed under conjugation. Now $A$
is a clique, so $a_1a_2^{-1}\in S$ for all $a_1,a_2\in A$. But then also
$a_2^{-1}a_1\in S$, and so $A^{-1}$ is also a clique. So by Claim~1, $H$ has
an exact factorization by $(A^{-1})^{-1}=A$ and $B$, and by Claim~2, $G$ is
non-synchronizing. \qed
\end{proof}

\section{Affine groups}

As noted earlier, a primitive affine group $G$ has regular socle $F^n$, where
$F$ is (the additive group of) a finite field of prime order $p$; the point
stabilizer (which is isomorphic to $G/F^n$) is an irreducible linear subgroup
of $\mathrm{GL}(n,p)$.

Since the socle is abelian and regular, left and right translations are the
same; so from Theorem~\ref{t:reg}, we immediately deduce:

\begin{theorem}
A primitive affine group is synchronizing if and only if it is separating.
\label{t:aff} \qed
\end{theorem}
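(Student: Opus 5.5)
The statement has two directions. One of them is already available: part (a) of the theorem in the section on separation says that every separating transitive group is synchronizing. A primitive affine group is transitive, since its socle $F^n$ acts transitively by translations, so that direction holds.

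For the converse, I will apply Theorem~\ref{t:reg}. The plan is to identify $\Omega=F^n$ with the group $H=(F^n,+)$ itself. The identification is the one described before Theorem~\ref{t:reg}: choose the base point $0$ and send $h\in H$ to $0+h=h$.

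I then need to check the hypothesis of Theorem~\ref{t:reg}, namely that both right and left multiplication on $H$ are induced by elements of $G$.
\begin{itemize}
\item Right multiplication by $c\in H$ is the translation $x\mapsto x+c$. This lies in $G$: it is the element with $A$ the identity and $\sigma$ trivial. Alternatively, as noted at the start of this section, we may take $F$ of prime order, and the translations form the socle of $G$.
\item Left multiplication by $c$ is $x\mapsto c+x$. Since $H$ is abelian, this is the same map as $x\mapsto x+c$, and so it also lies in $G$.
\end{itemize}

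With the hypothesis verified, Theorem~\ref{t:reg} gives that if $G$ is synchronizing then it is separating. Combining this with the first direction proves the equivalence.

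There is no real obstacle here. The only point that needs care is matching the convention of Theorem~\ref{t:reg}: there, $H$ acts on $\Omega$ by right multiplication after identifying $\Omega$ with a regular subgroup. The translation group of $F^n$ is regular, so this convention is met.
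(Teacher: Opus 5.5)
Your proposal is correct and is exactly the paper's argument: the direction separating $\Rightarrow$ synchronizing is part (a) of the theorem in the section on separation, and the converse is Theorem~\ref{t:reg} applied to the regular translation group $F^n$, where left and right translations coincide because the group is abelian. Your explicit checks, that translations lie in $G$ and that the identification matches the convention of Theorem~\ref{t:reg}, simply spell out the paper's one-line remark.
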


Affine groups may or may not be synchronizing. If $n=1$, then the degree is
prime, and every primitive group is synchronizing. For $n=2$, it is shown
in \cite[Theorem 7.12]{acs} that $G$ is synchronizing if and only if $H$ is
transitive on the $1$-dimensional subspaces of $F^2$. Little is known for
higher dimension. (For an extension of the above result, see
\cite[Theorem 5]{soicher}, but note that this is not a necessary and sufficient
condition.

\section{Diagonal groups}

Recall that a diagonal group has socle $T^{d+1}$, with $T$ a nonabelian finite
simple group, acting on the cosets of the subgroup $\{(t,t,\ldots,t):t\in T\}$;
the subgroup $T^d$ generated by the first $d$ factors is a regular subgroup, so
we can identify $\Omega$ with $T^d$ so that the $i$th factor of the socle acts
by right multiplication on the $i$th coordinate. The $(d+1)$st factor acts
by left multiplication of all coordinates simultaneously by the inverse of the
acting element. The full diagonal group $D(T,d)$ is generated by $T^{d+1}$
together with $\Aut(T)$ acting identically on all coordinates, $S_d$ permuting
the coordinates, and the transposition $(1,d+1)$ which acts as
\[(t_1,t_2,\ldots,t_d)\mapsto(t_1^{-1},t_1^{-1}t_2,\ldots,t_1^{-1}t_d).\]
For more details, see \cite{bcps}.

The two results on diagonal groups from \cite{bccsz} are:

\begin{theorem}
A diagonal group with two factors in the socle is synchronizing if and only if
it is separating.
\label{t:diag1}
\end{theorem}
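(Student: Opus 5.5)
The plan is to reduce directly to Theorem~\ref{t:reg}. One direction is already done: separating implies synchronizing for any transitive group, by part (a) of the separation theorem. So it remains to show that a synchronizing diagonal group $G$ with socle $T^2$ is separating. I will do this by exhibiting a group $H$ with which $\Omega$ can be identified, so that both left and right multiplication on $H$ are induced by elements of $G$.

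Take $d=1$ in the description of diagonal groups. Then $\Omega$ is identified with $T$ itself, with $H=T$ and the identification point being the coset of the diagonal subgroup. Under this identification:
\begin{itemize}
\item the first socle factor acts by right multiplication $x\mapsto xt$;
\item the second factor acts by $x\mapsto t^{-1}x$, that is, by left multiplication by $t^{-1}$.
\end{itemize}
As $t$ ranges over $T$, the second factor therefore induces all left multiplications. Both factors lie in the socle $T^2$, and any diagonal group contains its socle, so both families of maps are induced by elements of $G$. Hence the hypotheses of Theorem~\ref{t:reg} hold, and a synchronizing $G$ is separating. This gives the equivalence.

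The only point needing care, which I expect to be the main (mild) obstacle, is checking the identification of the two socle factors with right and left multiplication, including the inverse convention. For this I will work with the coset action of $T\times T$ on the cosets $D(s_1,s_2)$ of the diagonal $D=\{(t,t):t\in T\}$. Map each coset to $s_2^{-1}s_1\in T$; this is well defined because $(t,t)(s_1,s_2)=(ts_1,ts_2)$ gives $(ts_2)^{-1}(ts_1)=s_2^{-1}s_1$. Then:
\begin{itemize}
\item $(g,1)$ sends $x$ to $xg$;
\item $(1,g)$ sends $x$ to $g^{-1}x$.
\end{itemize}
This confirms the claim. Since Theorem~\ref{t:reg} only requires that each of the two multiplication actions be realized by some element of $G$, nothing further about the remaining generators of $G$ (automorphisms, or the swap of the two factors) is needed.
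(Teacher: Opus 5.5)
Your proposal is correct and follows the paper's argument. With $d=1$, the two socle factors act on $\Omega=T$ by right multiplication and by left multiplication by the inverse, so Theorem~\ref{t:reg} gives synchronizing $\Rightarrow$ separating, and the converse is part (a) of the separation theorem. The paper uses that converse implicitly; your explicit coset map $D(s_1,s_2)\mapsto s_2^{-1}s_1$ supplies the verification the paper omits.
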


\begin{theorem}
A diagonal group with more than two factors in the socle is non-synchronizing.
\label{t:diag2}
\end{theorem}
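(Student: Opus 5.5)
The plan is to exhibit, for every $d\ge2$, a non-trivial weakly perfect graph preserved by the full diagonal group $D(T,d)$, and then apply Theorem~\ref{t:wp}. Since non-synchronization passes to subgroups (the property is defined by the absence of an invariant structure, so it is closed under overgroups), this covers every diagonal group with socle $T^{d+1}$.

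\emph{The graph.} Identify $\Omega$ with $T^d$ as above. For $1\le i\le d$, let $Q_i$ be the partition in which two $d$-tuples lie in the same part when they agree in every coordinate except possibly the $i$th. Let $Q_{d+1}$ be the partition whose parts are the orbits of left multiplication, $\{(xt_1,\ldots,xt_d):x\in T\}$. These $d+1$ partitions are the orbit partitions of the $d+1$ simple factors of the socle. Consequently $D(T,d)$ permutes them: $\Aut(T)$ fixes each one, $S_{d+1}$ permutes them, and the socle fixes each one. Let $\Gamma$ be the graph in which two distinct points are adjacent when they lie in a common part of some $Q_i$. Then $\Gamma$ is $D(T,d)$-invariant. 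It is non-trivial, since for $d\ge2$ the tuples $(1,1,\ldots)$ and $(t,t',\ldots)$ with $t\ne t'$ both nonidentity are non-adjacent. Each part of each $Q_i$ is a clique of size $|T|$, so $\omega(\Gamma)\ge|T|$. It remains to find a proper colouring with $|T|$ colours, that is, a map $f:T^d\to T$ that is injective on every part of every $Q_i$.

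\emph{$d$ odd.} Take the alternating product
\[
f(t_1,\ldots,t_d)=t_1t_2^{-1}t_3t_4^{-1}\cdots t_d .
\]
This is a bijection in each coordinate separately, and under $t_j\mapsto xt_j$ the inner factors $x^{-1}x$ cancel, giving $f(xt)=x\,f(t)$, which is injective in $x$. No special property of $T$ is needed here.

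\emph{$d=2$.} A colouring is a map $c(t_1,t_2)$ that is a bijection in each argument and such that $x\mapsto c(xt_1,xt_2)$ is a bijection. In other words, it is a Latin square orthogonal to the Cayley table of $T$, equivalently a decomposition of the Cayley table into transversals. By the Hall--Paige theorem (the conjecture proved via Wilcox, Evans and Bray--Cai--Cameron--Spiga--Zhang), such a decomposition exists exactly when the Sylow $2$-subgroups of $T$ are trivial or non-cyclic. This holds for every non-abelian simple group, because a simple group with a nontrivial cyclic Sylow $2$-subgroup has a normal $2$-complement.

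\emph{$d\ge4$ even.} Reduce to the case $d=2$ by setting $u=t_2t_3^{-1}t_4\cdots t_d$, an alternating word of odd length $d-1$, and $f(t)=c(t_1,u)$. Then:
\begin{itemize}
\item $u$ is a bijection in each of $t_2,\ldots,t_d$, and $c$ is a bijection in each argument, so $f$ is injective on the parts of $Q_1,\ldots,Q_d$;
\item under left multiplication $u\mapsto xu$ while $t_1\mapsto xt_1$, so $f(xt)=c(xt_1,xu)$ is injective in $x$, which handles $Q_{d+1}$.
\end{itemize}
Hence $\chi(\Gamma)=\omega(\Gamma)=|T|$, so $\Gamma$ is weakly perfect, and Theorem~\ref{t:wp} finishes the proof.

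The main obstacle is the case $d=2$, that is, socle $T^3$. There the existence of the colouring is genuinely equivalent to a complete mapping of $T$, and I would rely on the Hall--Paige theorem rather than attempt a direct construction.
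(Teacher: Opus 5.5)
Your proof is correct and follows the paper's route. You use the same diagonal graph $\Gamma_D(T,d)$ with clique number $|T|$, the alternating-product colouring for odd $d$, and a Hall--Paige complete mapping (equivalently an orthogonal mate of the Cayley table) for $d=2$. For even $d\ge4$ you, like the paper, combine an odd alternating product of $d-1$ coordinates with the $d=2$ colouring; the only cosmetic difference is that you feed $u=t_2t_3^{-1}\cdots t_d$ into an arbitrary orthogonal mate $c$, whereas the paper writes the explicit colour $x_1^{-1}x_2x_3^{-1}\cdots x_{d-1}^{-1}(x_d\phi)$ using the orthomorphism $\phi$.
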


It follows immediately from Theorems~\ref{t:ons2}, \ref{t:aff}, \ref{t:diag1},
and \ref{t:diag2} and Corollary~\ref{c:sync} that a permutation group which
is synchronizing but not separating is almost simple. Examples do exist; we
will see an infinite family in the final section of the paper.

\subsection{Two socle factors}

Specializing the definition, we see that, if $d=1$, the diagonal group acts
on the simple group $T$; one factor of the socle acts by right multiplication,
and the other factor by left multiplication by the inverse. So
Theorem~\ref{t:diag1} follows immediately from Theorem~\ref{t:reg}.

These groups may or may not be synchronizing. Both positive and negative
examples are given in \cite{bglr}, which also illustrates the role played by
group factorization.

\subsection{More than two socle factors}

We now turn to the proof of Theorem~\ref{t:diag2}. This is also taken from
\cite{bccsz}; the proof involves constructing a weakly perfect graph, known
as the diagonal graph, on which the group acts. But the proof requires
another body of theory, outlined below.

The \emph{diagonal graph} $\Gamma_D(T,d)$, for $d\ge2$, can be defined for any
group $T$, finite or infinite (see~\cite{bcps}); we only require the case
where $T$ is a finite simple group. Recall that $\Omega$ is identified with
$T^d$; the diagonal graph is a Cayley graph $\Cay(T^d,S)$, where $S$ is the
union of the $d+1$ direct factors of the socle with the identity removed.
Thus it is invariant under the full diagonal group. The vertex set is $T^d$;
two $d$-tuples are joined if either they disagree in a single coordinate,
or they have the form $(x_1,\ldots,x_d)$ and $(tx_1,\ldots,tx_d)$, for
$x_1,\ldots,x_d,t\in T$ (the last type of join corresponding to the $(d+1)$st
socle factor).

It is easy to see that the clique number of the graph is $|T|$; the cliques
containing the identity correspond to the $d+1$ coordinate subgroups. We must
show that there is a colouring of the graph with colour set also indexed by
$T$.

\paragraph{Case $d$ odd:} In this case, we give the vertex $(x_1,\ldots,x_d)$
the colour\[x_1x_2^{-1}x_3\cdots x_{d-1}^{-1}x_d.\]
Now changing one coordinate clearly changes the colour; and multiplying on the
left by $t$ multiplies the entire expression on the left by $t$ (since the
intermediate factors all cancel). So we do have a proper colouring.

\paragraph{Case $d$ even:} This case is more difficult, and we look first at
the case $d=2$. In this case we can identify the diagonal graph with the
\emph{Latin square graph} associated with the Cayley table of $T$: this is
the graph whose vertices are the cells of the Cayley table, two vertices joined
if the cells lie in the same row, or the same column, or have the same entry.
We make one small adjustment: we re-order the columns by swapping the rows
indexed by $t$ and $t^{-1}$ for all $t\in T$. In other words, the $(x,y)$ entry
is $x^{-1}y$. This ensures that all elements of the socle preserve the graph,
which is thus identified with the diagonal graph.

The Cayley table is a Latin square: that is, each symbol occurs once in each
row and once in each column. A \emph{complete mapping} of a group $G$ is a
bijection $\theta:G\to G$ with the property that the map $\phi$ defined by
$x\phi=x(x\theta)$ is also a bijection. A complete mapping describes a
transversal of a Latin square (the definition ensures that the entries in
row $x$ and column $x\theta$ are all distinct). From the theory of Latin 
squares, it is known that if the Cayley table of a group $G$ has a transversal,
then it has a partition into transversals; that is, there is a (proper)
colouring of the vertices of the Latin square graph with $|G|$ colours.
Explicitly, colour the cell $(x,y)$ of the graph by the colour $x(y^{-1}\phi$).
It is clear that two cells in the same row or column (that is, same value of
$y$ or of $x$) are given different colours, since $\phi$ is a bijection. If
$(x,y)$ and $(u,v)$ contain the same symbol, then $xy^{-1}=uv^{-1}$, so
\[x(y^{-1}\phi)=xy^{-1}(y^{-1}\theta)\ne uv^{-1}(v^{-1}\theta)=u(v^{-1}\phi),\]
since $\theta$ is a bijection.

So we need to know that every non-abelian finite simple group has a complete
mapping.

The general question of complete mappings of groups was investigated by Hall
and Paige~\cite{hp}. They showed that a necessary condition for $G$ to have
a complete mapping is that either $|G|$ has odd order, or the Sylow
$2$-subgroups of $G$ are non-cyclic; they conjectured that these conditions are
also sufficient. They proved their conjecture for solvable groups; the general
proof, which relies on the Classification of Finite Simple Groups, was
completed by Wilcox, Evans and Bray in 2009 \cite{wilcox,evans}. (Bray's part
was not published until the appearance of \cite{bccsz}.)

Since any finite simple group has even order and non-cyclic Sylow $2$-subgroups
(the latter part by Burnside's transfer theorem), we see that
Theorem~\ref{t:diag1} is true for $d=2$.

The extension to all even $d$ is now fairly straightforward. Let $\theta$ be
a complete mapping for $T$, and $\phi$ the corresponding orthomorphism (the
map $x\mapsto x(x\theta)$. Then give the vertex $(x_1,\ldots,x_d)$ the colour
$x_1^{-1}x_2x_3^{-1}\cdots x_{d-1}^{-1}(x_d\phi)$. Once again, changing a
single coordinate changes the colour. A calculation shows that multiplying
each coordinate on the left by $t$ multiplies the colour by 
$(x_d\phi)^{-1}t^{-1}((tx_d)\phi)$. So we need to show that
$(tx_d)\phi\ne t(x_d)\phi$, that is, $tx_d(tx_d)\theta\ne tx_d(x_d)\theta$,
which is true since $\theta$ is a bijection. \qed

\section{Actions of symmetric groups}

We are left with the almost simple groups.

According to Theorem~\ref{t:base}, these can be divided into large and small
groups. The small groups will have more orbits on $2$-sets, so more orbital
graphs; in the absence of general methods to handle them, I will ignore these
groups, and concentrate on the large groups. In this section I discuss the
symmetric and alternating groups. The relevant actions of $S_n$ and $A_n$ are
those on $k$-subsets of $\{1,\ldots,n\}$, or on partitions of $\{1,\ldots,n\}$
into $l$ parts of size $k$ with $kl=n$.

\subsection{On subsets}

The first case to consider is the action of $S_n$ on the set $\Omega$ of
$k$-element subsets of $\{1,\ldots,n\}$. I briefly describe the case $k=2$
before going on to the general case.

The group $S_n$ has two orbits on pairs of $2$-subsets, the intersecting and
disjoint pairs; so we can take the graph $\Gamma$ to be the line graph of the
complete graph $K_n$, where two pairs are joioned if they intersect. For
$n\ge5$, the largest clique consists of all edges through a vertex,
and there do not exist two disjoint such cliques. The largest independent set
has size $\lfloor n/2\rfloor$, and covers all vertices if and only if $n$ is
even. Graph theory textbooks show that, for even $n$, $K_n$ has a
$1$-factorization, a partition of the edge set into $1$-factors or sets of
pairwise disjoint edges covering all vertices. We conclude:

\begin{theorem}
Let $G$ be the symmetric group $S_n$ acting on $2$-subsets. Then the following
are equivalent: $G$ is synchronizing; $G$ is separating; $n$ is odd.
\label{t:2sets} \qed
\end{theorem}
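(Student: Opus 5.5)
Since $S_n$ on $2$-subsets has rank $3$ (for $n\ge4$), the only non-trivial $G$-invariant graphs are the line graph $L(K_n)$, where two pairs are adjacent if they intersect, and its complement, the Kneser graph $K(n,2)$, where they are adjacent if disjoint. By Theorem~\ref{t:wp} and part~(b) of the separation theorem, it suffices to decide for which $n$ either of these two graphs $\Gamma$ satisfies $\omega(\Gamma)=\chi(\Gamma)$ (for synchronization) or $\omega(\Gamma)\chi(\Gamma)=|\Omega|=\binom n2$ (for separation). Since separating implies synchronizing, the cycle of implications I will prove is: $n$ odd $\Rightarrow$ separating $\Rightarrow$ synchronizing $\Rightarrow$ $n$ odd. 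Small $n$ (say $n\le4$) are checked by hand. For $n=3$ the group is doubly transitive on three points, hence trivially separating. For $n=4$ the action is imprimitive, since complementary pairs form blocks, so it is non-synchronizing, consistent with $n$ even.

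For \emph{synchronizing $\Rightarrow$ $n$ odd}, suppose $n$ is even. Take $\Gamma=L(K_n)$. The edges through a fixed vertex give a clique of size $n-1$. A $1$-factorization of $K_n$ partitions the $2$-subsets into $n-1$ independent sets of $\Gamma$, so $\chi(\Gamma)\le n-1$. Hence $\Gamma$ is weakly perfect, and $G$ is non-synchronizing by Theorem~\ref{t:wp}. I only need to cite the existence of a $1$-factorization, for instance the standard rotational construction.

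For \emph{$n$ odd $\Rightarrow$ separating}, I must show that for each of the two graphs, $\omega\cdot\alpha<\binom n2$ when $n\ge5$ is odd. Since each graph's independence number is the other's clique number, both cases reduce to bounding the product $\omega(L(K_n))\cdot\omega(K(n,2))$. The largest family of pairwise disjoint edges has size $\lfloor n/2\rfloor=(n-1)/2$. The largest family of pairwise intersecting edges is either a star, of size $n-1$, or a triangle, of size $3$. So for $n\ge5$ the product is $(n-1)\cdot\frac{n-1}{2}<\frac{n(n-1)}2=\binom n2$. Thus neither graph satisfies $\omega\chi=|\Omega|$, since $\omega\chi\le\binom n2$ would force $\chi=|\Omega|/\omega$, while in general $\chi\ge|\Omega|/\alpha$, giving $\omega\alpha\ge|\Omega|$. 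So $G$ is separating.

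The step needing most care is the intersecting-family bound: pairwise intersecting $2$-sets form a star or lie inside a $3$-set. The proof is easy. If three edges pairwise meet but share no common vertex, they form a triangle, and any fourth edge meeting all three must then lie within the triangle's vertices. The rest is the bookkeeping above and the small cases.
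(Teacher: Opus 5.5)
Your route is the paper's. For even $n$, the star of size $n-1$ and a $1$-factorization of $K_n$ make $L(K_n)$ weakly perfect. For odd $n\ge5$, the largest intersecting family ($n-1$) times the largest matching ($(n-1)/2$) falls short of ${n\choose 2}$, which rules out both orbital graphs.

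One sentence is wrong, however. From $\chi=|\Omega|/\omega$ and $\chi\ge|\Omega|/\alpha$ you only get $\alpha\ge\omega$, not $\omega\alpha\ge|\Omega|$. No argument of that shape can work in general: the icosahedron is vertex-transitive on $12$ vertices with $\omega=\alpha=3$ and $\chi=4$, so $\omega\chi=|\Omega|$ while $\omega\alpha<|\Omega|$.

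The fix is to drop $\chi$ altogether. Part (b) of the separation theorem is really about $\omega(\Gamma)\alpha(\Gamma)=|\Omega|$; its proof uses a clique and an independent set of maximum size. Argue directly from the definition. Suppose $A,B$ witness non-separation, and let $\Gamma$ be the graph whose edges are the $G$-images of pairs in $A$. Then $\Gamma$ is non-trivial and $G$-invariant, and $A$ is a clique. Also $B$ is independent, since an edge inside $B$ would give some $g$ with $|Ag\cap B|\ge2$. Hence $\omega\alpha\ge|\Omega|$.

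Your bound $\omega\alpha<{n\choose 2}$ for $L(K_n)$ and $K(n,2)$, the only non-trivial invariant graphs, therefore already proves separation for odd $n\ge5$. The faulty sentence can simply be deleted, along with the ``$\omega\chi$'' criterion in your first paragraph.
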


Now for the general case. Since the actions on $k$-subsets
and $(n-k)$-subsets are isomorphic, and the action on $n/2$-subsets is
imprimitive if $n$ is even, we may assume that $k<n/2$. With this assumption,
the intersection of two $k$-subsets has cardinality in the set
$\{0,1,\ldots,k-1\}$, and for each $i$ in this set, the pairs of $k$-sets with
intersection of size $i$ is an orbit of the symmetric group.

Hence, for every subset $L$ of $\{0,1,\ldots,k-1\}$ except $\emptyset$ and
$\{0,1,\ldots,k-1\}$, there is a non-trivial $S_n$-invariant graph in which two
vertices are adjacent if and only if the cardinality of their intersection
belongs to $L$; so there are many cases to check.

The author with Mohammed Aljohani and John Bamberg~\cite{abc} conjectured a
result which cuts through this complexity; and Danila Cherkashin and Jacob
Shubin~\cite{cs}, using a beautiful result of Michel Deza, Paul Erd\H{o}s and
Peter Frankl~\cite{def}, proved the conjecture in a recent paper.

I give the background to the conjecture and outline the proof.

Suppose that $0<t<k<n$. A \emph{Steiner system} $S(t,k,n)$ consists of a set
of $n$ points (which we can take to be $\{1,\ldots,n\}$) together with a
collection $\mathcal{B}$ of $k$-subsets of the point set, with the property
that any $t$-set of points is contained in a unique block in $\mathcal{B}$.

\begin{theorem}
There is a function $F(k)$ with the following property.
Suppose that, for $m\ge F(k)$, the symmetric group $S_n$ acting on $k$-subsets
is non-separating. Then, for some $t$ with $0<t<k$, the non-separation is
witnessed by the graph  $\Gamma_t$ with two subsets joined if their
intersection has cardinality less than $t$; the clique $A$ is the block set
of a Steiner system $S(t,k,n)$, and the independent set consists of all $k$-sets
containing a fixed $t$-set $T$.
\label{t:sn-sep}
\end{theorem}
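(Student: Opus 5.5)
The plan is to translate non-separation into a pair of extremal set-system problems, one for the clique and one for the independent set, and then apply the Deza--Erd\H{o}s--Frankl theorem to both at once. By part (b) of the separation theorem, non-separation of $S_n$ on $k$-subsets (with $k<n/2$) gives a non-trivial $S_n$-invariant graph $\Gamma$ with $\omega(\Gamma)\chi(\Gamma)=\binom nk$. Equivalently, it gives a clique $A$ and an independent set $B$ with $|A|\cdot|B|=\binom nk$. As noted before the statement, $\Gamma$ is determined by a proper non-empty subset $L\subset\{0,1,\ldots,k-1\}$, with two $k$-sets adjacent exactly when the size of their intersection lies in $L$. So $A$ is an $L$-intersecting family, and $B$ is an $L'$-intersecting family, where $L'=\{0,\ldots,k-1\}\setminus L$.

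The first step is to invoke the Deza--Erd\H{o}s--Frankl theorem~\cite{def}. For $n\ge n_0(k)$, an $L$-intersecting family of $k$-sets, with $L=\{l_1<\cdots<l_s\}$, has size at most $\prod_{i=1}^s (n-l_i)/(k-l_i)$. Multiplying this bound for $A$ by the corresponding bound for $B$, the product runs over all of $\{0,\ldots,k-1\}$, and
\[\prod_{i=0}^{k-1}\frac{n-i}{k-i}=\binom nk.\]
Hence $|A||B|=\binom nk$ forces equality in both bounds. In particular, both $|A|$ and $|B|$ have the maximal order of magnitude $n^{|L|}$ and $n^{|L'|}$ respectively. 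We set $F(k)$ to be at least $n_0(k)$.

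The second step, which I expect to be the main obstacle, is to pin down $L$. The structural half of the Deza--Erd\H{o}s--Frankl theorem says that a family of size at least $c\,n^{s}$ (for a suitable constant $c$) must have a sunflower-like kernel structure. This forces the divisibility chain
\[(l_2-l_1)\mid(l_3-l_2)\mid\cdots\mid(l_s-l_{s-1})\mid(k-l_s).\]
Applying this simultaneously to $L$ and to $L'$ should leave only the possibility that one of them is an initial segment $\{0,1,\ldots,t-1\}$ and the other is $\{t,\ldots,k-1\}$. The intuition is that the complement of a set satisfying the chain condition, together with $k$, again satisfies it only in this case. I would first try a direct combinatorial argument on the gaps, handling separately whether $0\in L$ or $0\in L'$. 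If that runs into trouble, I would fall back on comparing more carefully how far $|A|$ and $|B|$ can fall below the extremal bounds, since the product is tight and leaves no slack. The outcome is that $\Gamma$ is $\Gamma_t$ (or its complement, which just swaps the roles of $A$ and $B$).

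The final step identifies the extremal families. For $L=\{0,\ldots,t-1\}$, the bound reads $|A|\le\binom nt/\binom kt$, since no $t$-set lies in two members of $A$. Equality means every $t$-set lies in exactly one member, so $A$ is the block set of a Steiner system $S(t,k,n)$. Correspondingly, $B$ is a $t$-intersecting family of size $\binom{n-t}{k-t}$. By the Erd\H{o}s--Ko--Rado theorem for $t$-intersecting families (valid for $n$ large in terms of $k$, which we absorb into $F(k)$), equality holds only for the family of all $k$-sets containing a fixed $t$-set $T$. This gives exactly the description in Theorem~\ref{t:sn-sep}.
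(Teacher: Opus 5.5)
Your outline matches the paper's proof: the Deza--Erd\H{o}s--Frankl theorem applied to $A$ (an $L$-clique) and to $B$ (an $L'$-clique), the divisibility chain to pin down $L$, then counting for the Steiner system and Erd\H{o}s--Ko--Rado for the star. Your observation that the two DEF bounds multiply to exactly $\prod_{i=0}^{k-1}(n-i)/(k-i)={n\choose k}$, so both must be attained, is a tidy variant. The paper uses DEF only for orders of magnitude, and recovers the sharp bounds ${n\choose t}/{k\choose t}$ and ${n-t\choose k-t}$ once $L$ is known. Your unconditional use of the DEF bound is also fine for large $n$, since families below the threshold $k^2(2n)^{|L|-1}$ satisfy it trivially.

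The gap is the step you flag as the main obstacle. You assert that the chain conditions leave only $L=\{0,\ldots,t-1\}$ (up to complementation), but you give no argument. Your suggested attack, splitting on whether $0\in L$ or $0\in L'$, looks at the wrong end of the chain. The missing idea is that the chain terminates in $k-l_s$, so you should split on which of $L$, $L'$ contains $k-1$.

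Call that set $M=\{m_1<\cdots<m_s\}$. By your equality step, its family has size $\prod_{i\in M}(n-i)/(k-i)=\Theta(n^s)$, which exceeds $k^2(2n)^{s-1}$ for large $n$. So DEF gives $(m_2-m_1)\mid\cdots\mid(m_s-m_{s-1})\mid(k-m_s)=1$. Hence every gap is $1$, $M=\{k-s,\ldots,k-1\}$, and its complement is $\{0,\ldots,k-s-1\}$. Put $t=k-s$; then $0<t<k$ because both $L$ and $L'$ are non-empty.

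Only the chain for $M$ is needed, not both simultaneously, and no fallback comparison of how far $|A|,|B|$ fall below the bounds is required. If $M=L$, swap the roles of $A$ and $B$, since the complement of $\Gamma_L$ is then $\Gamma_t$. Your final paragraph then completes the proof exactly as in the paper.
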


Before sketching the proof, I make a few comments.

\begin{enumerate}
\item Double counting shows that the number of blocks of a Steiner system
$S(t,k,n)$ is ${n\choose t}/{k\choose t}$, while the number of $k$-sets
containing a fixed $t$-set is ${n-t\choose k-t}$. The product of these numbers
is $n\choose k$, as it should be.
\item Double counting also shows that, for $0\le i\le t-1$, the number of 
blocks of a Steiner system $S(t,k,n)$ containing a fixed set of $i$ blocks is
${n-i\choose t-i}/{k-i\choose t-i}$. So a necessary condition for the existence
of such a system is that all these numbers should be integers. (These are the
\emph{divisibilitly conditions}.)

A remarkable result of Peter Keevash~\cite{keevash} settled asymptotically a
problem which had been open since Wesley Woolhouse posed it in the \emph{Lady's
and Gentleman's Diary} in 1844~\cite{woolhouse}. Keevash showed that, for
sufficiently large $n$ (in terms of $k$ and $t$), the divisibilitly conditions
are also sufficient for the existence of a Steiner system $S(t,k,n)$. This
means that, for sufficiently large $n$, the question whether $S_n$ acting on
$k$-sets is separating or not can be decided by simple arithmetic (checking the
divisibility conditions for all $t$).
\item What about synchronization? It is easy to see that the set of $k$-sets
cannot be partitioned into independent sets of the type described in the
theorem. So, for $n$ as in the theorem, the action of $S_n$ on $k$-sets is
non-synchronizing if and only if, for some $t$, the set of $k$-sets can be
partitioned into block sets of Steiner systems $S(t,k,n)$ for some fixed $t$.
Now for $t=1$, the only divisibility condition is that $k$ divides $n$. In
this case, a Steiner system is a set of $k$-sets partitioning the point set,
and the theorem of Baranyai~\cite{baranyai} shows that the partition exists.
So $S_n$ on $k$-sets is non-synchronizing whenever $k$ divides $n$. For larger
values of $t$, the question of when the set of $k$-sets can be partitioned into
Steiner systems is unsolved except in very few cases. For example, when $t=2$
and $k=3$, the divisibility conditions show that $n$ is congruent to $1$ or
$3$ (mod~$6$); Kirkman~\cite{kirkman} constructed Steiner systems $S(2,3,n)$
for all $n$ satisfying these conditions, and Lu~\cite{lu} and
Teirlinck~\cite{teirlinck} showed that the partition exists for all such $n$
except $n=7$.
\end{enumerate}

In the case $n=7$, the non-existence of the partition was already known to
Cayley~\cite{cayley}): in fact there are no more than two disjoint $S(2,3,7)$
systems. But a different construction shows that $S_7$ on $3$-sets is
non-synchronizing. Take $A$ to be the set of lines of the Fano plane,
the unique (up to isomorphism) $S(2,3,7)$ (Figure~\ref{f:fano}).

\begin{figure}[htbp]
\begin{center}
\setlength{\unitlength}{0.0505mm}
\begin{picture}(510,425)
\stsseven
\end{picture}
\end{center}
\caption{\label{f:fano}The Fano plane}
\end{figure}

For each line $L$ in $A$, let $S(L)$ consist of $L$ together with the four
$3$-sets disjoint from $L$. Then $|S(L)|=5$; and $S(L_1)\cap S(L_2)=\emptyset$
for $L_1\ne L_2$, since no $3$-set is disjoint from two lines. So the seven
sets $S(L)$ form a partition. Moreover, $Ag$ is
a transversal to this partition, for any $g\in S_7$; so the partition is
section-regular with section $A$, and $S_7$ is non-synchronizing.

\medskip

Next I state the Deza--Erd\H{o}s--Frankl theorem, and the Erd\H{o}s--Ko--Rado
theorem which we also need for the proof.

\begin{theorem}[Deza--Erd\H{o}s--Frankl] There is a function $f$ with the
following property: Given $k$, let $L$ be a subset of $\{0,1,\ldots,k-1\}$ with
$|L|=t$, say
\[L=\{l_1<l_2<\cdots<l_t\}.\]
Define a graph $\Gamma_L$ whose vertices are the $k$-subsets
of $\{1,\ldots,n\}$, with adjacency defined by the cardinality of the
intersection belonging to $L$. Let $A$ be a clique in this graph.
If $n\ge f(k)$ and $|A|>k^2(2n)^{t-1}$, then
\begin{enumerate}
\item $(l_2-l_1)\mid(l_3-l_2)\mid\cdots\mid(l_t-l_{t-1}\mid(k-l_t)$;
\item $|A|\le\prod_{i=1}^t(n-l_i)/(k-l_i)$. \qed
\end{enumerate}
\label{t:def}
\end{theorem}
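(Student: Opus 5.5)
The statement is the Deza--Erd\H{o}s--Frankl theorem, which the paper quotes without proof. To prove it I would use the classical \emph{$\Delta$-system} (sunflower) method. Recall that a sunflower with kernel $E$ is a family of sets whose pairwise intersections are all equal to $E$. The Erd\H{o}s--Rado sunflower lemma says that any family of more than $k!\,s^k$ sets of size $k$ contains a sunflower with $s+1$ petals. We fix $s=k+1$, which is a constant once $k$ is fixed. For a clique $A$ of $\Gamma_L$, so that $|F\cap F'|\in L$ for all distinct $F,F'\in A$, and for each $F\in A$, define
\[\mathcal{K}(F)=\{E\subsetneq F: E \hbox{ is the kernel of a sunflower in } A \hbox{ with at least } k+1 \hbox{ petals, one of which is } F\}.\]
Every $E\in\mathcal{K}(F)$ is the intersection of two members of $A$, so $|E|\in L$.

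\textbf{Step 1: $\mathcal{K}(F)$ is closed under intersection.} Let $E_1,E_2\in\mathcal{K}(F)$. The petals of the two sunflowers, with their kernels removed, are pairwise disjoint. Because there are $k+1$ of them, we can choose petals $G_1\ne F$ of the first sunflower and $G_2\ne F$ of the second so that $G_1\cap G_2=E_1\cap E_2$. I would then build a sunflower with kernel $E_1\cap E_2$ and many petals by a greedy choice among the petals, using that every set has only $k$ elements. The point to check carefully is that "many" genuinely means at least $k+1$.

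\textbf{Step 2: all but few $F$ are good.} Call $F$ \emph{good} if the members of $\mathcal{K}(F)$ have exactly the sizes $l_1,\dots,l_t$, and every set $D\subseteq F$ that is contained in no proper member of $\mathcal{K}(F)\cup\{F\}$ has $|D|\le t$ in a suitable rank sense. (This is the standard "$\mathcal{K}(F)$ has rank $t$" condition.) If $F$ is bad, there is a set $D\subseteq F$ of size about $t$ that lies in fewer than $k!\,k^k$ members of $A$ "transversally"; otherwise the sunflower lemma would produce a new kernel in $\mathcal{K}(F)$. Counting pairs $(D,F)$ bounds the number of bad sets by roughly $k^2(2n)^{t-1}$. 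This is where the threshold $|A|>k^2(2n)^{t-1}$ in the hypothesis comes from, and where $n\ge f(k)$ is needed to absorb the constants. So under the hypothesis a good $F$ exists; in fact most members of $A$ are good.

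\textbf{Step 3: structure at a good $F$.} Take $E\in\mathcal{K}(F)$ with $|E|=l_i$. The minimal members of $\mathcal{K}(F)$ properly containing $E$ have size $l_{i+1}$ (with $l_{t+1}$ read as $k$). By Step 1, any two of them meet exactly in $E$, and goodness forces them to cover $F$. Hence their differences from $E$ partition $F\setminus E$ into blocks of size $l_{i+1}-l_i$. This gives $(l_{i+1}-l_i)\mid(k-l_i)$ for each $i$, and subtracting $l_{i+1}-l_i$ gives $(l_{i+1}-l_i)\mid(k-l_{i+1})$. Iterating from the bottom of the chain upwards then gives the divisibility chain (a).

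\textbf{Step 4: the bound (b).} For the bound I would map each member of $A$ to a chain of kernels and count the choices at each level. Going from a kernel of size $l_i$ to one of size $l_{i+1}$ contributes a factor of at most $(n-l_i)/(k-l_i)$, because the extending blocks are disjoint outside $E$ and each has size $k-l_i$ relative to the ambient $n-l_i$ points. Multiplying over $i$ gives $\prod_{i=1}^t (n-l_i)/(k-l_i)$. The good sets are bounded this way directly. The bad sets are few by Step 2, and for $n\ge f(k)$ they can be absorbed, for example by comparing the bad count with the leading term of the product.

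\textbf{Main obstacle.} The delicate part is Step 2. One has to formulate precisely what "rank $t$" means for an intersection-closed family, so that badness produces a small witnessing set, while still getting the counting bound $k^2(2n)^{t-1}$ with constants that work uniformly in $L$. I would first try the Frankl--F\"uredi formulation: $F$ is good when every subset $D$ of $F$ not contained in any member of $\mathcal{K}(F)$ satisfies $|D|>t-1$. The count over $(t-1)$-sets $D$ then gives the $n^{t-1}$ factor.
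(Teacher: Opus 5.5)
The paper does not prove this theorem: it is quoted from Deza, Erd\H{o}s and Frankl and closed with \qed. The relevant comparison is therefore with their $\Delta$-system argument, whose general shape your outline follows. As written, though, your outline has a genuine gap exactly where the real work lies.

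\textbf{Step 1 is false as stated, and your construction cannot prove it.} Take $k=3$ and $F=\{1,2,3\}$. Let $A$ consist of $F$, the three sets $\{1,a_i,b_i\}$ and the three sets $\{2,c_i,d_i\}$ ($i=1,2,3$), with all the letters distinct new points. Then $A$ is a clique in $\Gamma_L$ for $L=\{0,1\}$. Also $\{1\},\{2\}\in\mathcal{K}(F)$, but $\emptyset\notin\mathcal{K}(F)$, because no member of $A$ is disjoint from $F$.

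The greedy idea fails for a structural reason. Suppose $E_1\cap E_2$ is properly contained in both $E_1$ and $E_2$. A sunflower with kernel $E_1\cap E_2$ can contain at most one petal of each given sunflower, since two petals of the same one meet in $E_1$ or $E_2$. It also cannot contain $F$ together with a petal $G_1$ of the first one, since $F\cap G_1=E_1$. So the pair $G_1,G_2$ you find shows only that $|E_1\cap E_2|\in L$.

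Any true closure property can hold only for typical members of $A$, and must be extracted from the largeness of $A$. Step 2 does not do this either. Applying the Erd\H{o}s--Rado lemma to the members containing an uncovered $D\subseteq F$ gives a sunflower whose kernel contains $D$. But that kernel need not lie inside $F$, and the petals need not meet $F$ in the kernel, so no new element of $\mathcal{K}(F)$ is produced. The key structural fact --- that all but $O(n^{t-1})$ members $F$ have an intersection-closed family of kernels of rank $t$ --- is therefore assumed rather than proved.

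\textbf{Steps 3 and 4 would fail even granting that structure.}

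In Step 3, partitioning $F\setminus E$ gives only $(l_{i+1}-l_i)\mid(k-l_i)$, hence $(l_{i+1}-l_i)\mid(k-l_{i+1})$. These conditions do not imply the chain (a). For $k=4$ and $L=\{0,2,3\}$ they all hold, yet $l_2-l_1=2$ does not divide $l_3-l_2=1$. You must run the partition argument inside a kernel $E'\supset E$ of size $l_{i+2}$. For $x\in E'\setminus E$, the smallest kernel containing $E\cup\{x\}$ lies in $E'$, and this gives $(l_{i+1}-l_i)\mid(l_{i+2}-l_i)$.

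In Step 4, the extending blocks have size $l_{i+1}-l_i$, not $k-l_i$. The factor $(n-l_i)/(k-l_i)$ arises only as a ratio in a double count. That count needs the global fact that distinct kernels of size $l_{i+1}$ through $E$ meet exactly in $E$; this does follow from your petal argument. The more serious problem is the bad sets. Bounding the good members by the full product and then adding $O(n^{t-1})$ bad ones yields only $|A|\le\prod_{i}(n-l_i)/(k-l_i)+O(n^{t-1})$. Comparing orders of magnitude cannot recover the exact inequality (b). You would have to show that the bad members reduce the count of good ones by at least as much.
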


\begin{theorem}[Erd\H{o}s--Ko--Rado]
There is a function $g$ such that, if a family $\mathcal{F}$ of $k$-subsets of
an $n$-set with $n\ge g(k)$ have the property that any two intersect in at
least $t$ points, then $|\mathcal{F}|\le{n-t\choose k-t}$, with equality only
if $\mathcal{F}$ consists of all the $k$-sets containing a fixed $t$-set.
\label{t:ekr} \qed
\end{theorem}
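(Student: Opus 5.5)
The plan is a direct counting argument (in the spirit of the original proof of Erd\H{o}s, Ko and Rado), valid for $n$ large compared with $k$. The idea is to split into two cases according to whether $\mathcal{F}$ has a common $t$-set. If it does, the bound and the equality case are immediate. If it does not, I will show that $|\mathcal{F}|$ is of order $n^{k-t-1}$, which is smaller than ${n-t\choose k-t}$, of order $n^{k-t}$, once $n\ge g(k)$.

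\emph{Case 1: some $t$-set $T$ is contained in every member of $\mathcal{F}$.} Then $\mathcal{F}$ is a subfamily of the ``star'' of all $k$-sets containing $T$. This star has exactly ${n-t\choose k-t}$ members, so the bound holds. Equality holds precisely when $\mathcal{F}$ is the whole star.

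\emph{Case 2: no $t$-set is contained in all members of $\mathcal{F}$.} We may assume $\mathcal{F}\ne\emptyset$; fix $A\in\mathcal{F}$. Every $F\in\mathcal{F}$ satisfies $|F\cap A|\ge t$, so $F$ contains at least one $t$-subset of $A$. Therefore
\[|\mathcal{F}|\le\sum_{T}|\{F\in\mathcal{F}:T\subseteq F\}|,\]
where $T$ runs over the ${k\choose t}$ $t$-subsets of $A$.

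Fix such a $T$. By the hypothesis of Case~2, there is some $B_T\in\mathcal{F}$ with $T\not\subseteq B_T$. Let $F\in\mathcal{F}$ with $T\subseteq F$. Then $|F\cap B_T|\ge t$, while $T\not\subseteq B_T$. Hence $F\cap B_T$ is not contained in $T$: otherwise it would be a subset of $T$ of size at least $t=|T|$, i.e.\ equal to $T$, forcing $T\subseteq B_T$. So $F$ contains some element $x\in B_T\setminus T$.

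For a fixed $x$, the number of $k$-sets containing $T\cup\{x\}$ is ${n-t-1\choose k-t-1}$, and there are at most $k$ choices for $x$. Thus at most $k{n-t-1\choose k-t-1}$ members of $\mathcal{F}$ contain $T$. Summing over $T$ gives
\[|\mathcal{F}|\le k{k\choose t}{n-t-1\choose k-t-1}=k{k\choose t}\frac{k-t}{n-t}{n-t\choose k-t}.\]
This is strictly less than ${n-t\choose k-t}$ as soon as $n-t>k^2{k\choose t}$. Since $t<k$ (the case $t=k$ is trivial), it suffices to take $g(k)=k+k^2 2^k$.

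Combining the two cases: $|\mathcal{F}|\le{n-t\choose k-t}$ always, and equality forces Case~1 with $\mathcal{F}$ the full star. That is exactly the statement of Theorem~\ref{t:ekr}. The only delicate point is the Case~2 step showing that $F\cap B_T\not\subseteq T$; it is what brings the count down by a factor of $n$, and it uses only that $|T|=t$ and $T\not\subseteq B_T$. The rest is routine estimation of binomial coefficients. We do not need the optimal threshold $n\ge(t+1)(k-t+1)$, only some function $g$.
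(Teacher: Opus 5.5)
The paper gives no proof of this statement. It quotes the Erd\H{o}s--Ko--Rado theorem, with its equality case, as a known result, and uses it as a black box in the outline of the Cherkashin--Shubin proof of Theorem~\ref{t:sn-sep}; both the bound $|B|\le{n-t\choose k-t}$ and the characterization of equality are needed there. Your argument is a correct, self-contained proof.

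The checks go through:
\begin{itemize}
\item The case split is exhaustive, and Case~1 is immediate.
\item Any member $F\supseteq T$ is automatically different from $B_T$, so the hypothesis $|F\cap B_T|\ge t$ applies. Then $F\cap B_T\subseteq T$ would force $F\cap B_T=T\subseteq B_T$, a contradiction, so $F$ contains some $x\in B_T\setminus T$.
\item The identity ${n-t-1\choose k-t-1}=\frac{k-t}{n-t}{n-t\choose k-t}$ is right.
\item For $t\le k-1$ and $n\ge k+k^2 2^k$ you get $n-t>k^2{k\choose t}\ge k(k-t){k\choose t}$, so Case~2 gives strict inequality. The equality case therefore follows with no extra work.
\end{itemize}

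Your route actually proves a stability statement stronger than the one quoted: a $t$-intersecting family not contained in a star has size $O(n^{k-t-1})$, an order of magnitude below the star. This is in the same spirit as the polynomial-order reasoning the paper applies via the Deza--Erd\H{o}s--Frankl theorem. Your threshold $g(k)$ also depends only on $k$, as the statement requires.

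The price is a threshold exponential in $k$. The classical sharp range $n>(t+1)(k-t+1)$ (Frankl, Wilson) needs quite different tools, such as shifting or eigenvalue methods in the Johnson scheme. Since Theorem~\ref{t:sn-sep} is only claimed for sufficiently large $n$, your weaker threshold is all the paper needs.
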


Now we outline the proof of Theorem~\ref{t:sn-sep} by Cherkashin and Shubin.
If $S_n$ on $k$-sets is not separating, then there is a subset $L$ of
$\{0,\ldots,k-1\}$ with $A$ a clique and $B$ an independent set in
$\Gamma_L$ and $|A|\cdot|B|={n\choose k}$. Also $B$ is a clique in the graph
$\Gamma_{L'}$, where $L'=\{0,1,\ldots,k-1\}\setminus L$, so $|L'|=k-t$.
By the last statement of Theorem~\ref{t:def}, if $|L|=t$, then $|A|=O(n^t)$ and
$|B|=O(n^{k-t})$. Now we cannot have $|A|=O(n^{t-1})$ or $|B|=O(n^{k-t-1})$,
since $|A|\cdot|B|=\Omega(n^k)$; so the conclusions of the theorem hold for 
both $A$ and $B$, for large enough $n$.

The element $k-1$ must belong to either $L$ or $L'$; we may assume without loss
that it is in $L'$, so $k-l'_{k-t}=1$. Then the divisibility conditions show
that $l'_i-l'_{i-1}=1$ for all $i$; that is, $L'=\{t,t+1,\ldots,k-1\}$, and
$L=\{0,1,\ldots,t-1\}$. So $A$ consists of sets meeting in at most $t-1$ points,
Thus, any $t$-set lies in at most one set in $A$.
A simple counting argument shows that $|A|\le{n\choose t}/{k\choose t}$.

Also any two sets in $B$ meet in at least $t$ points, so Theorem~\ref{t:ekr}
shows that $|B|\le{n-t\choose k-t}$. Thus $|A|\cdot|B|\le{n\choose k}$. Since
we have equality here, we must have equality in the bounds for $|A|$ and $|B|$,
and the theorem follows. \qed

\medskip

The theorems of Baranyai, Lu, and Teirlinck, and the special construction for
$n=7$, show:
\begin{theorem}
Let $G$ be the symmetric group $S_n$ acting on $3$-sets. Then the following are
equivalent: $G$ is synchronizing; $G$ is separating; $n\equiv2$, $4$ or $5$
$(\mathrm{mod}~6)$. \qed
\end{theorem}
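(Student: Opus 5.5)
The plan is to prove the two implications ``$n\not\equiv 2,4,5\pmod 6\Rightarrow$ non-synchronizing'' and ``$n\equiv 2,4,5\pmod 6\Rightarrow$ separating''. Since separating implies synchronizing, the three conditions then coincide. Throughout I assume $n\ge7$, so that $3<n/2$ and the action on $3$-sets is primitive. The few small cases with $n\le 6$ are either imprimitive ($n=6$) or dealt with by direct inspection.

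\emph{Non-synchronization when $n\equiv 0,1,3\pmod 6$.} If $n\equiv 0$ or $3\pmod 6$, then $3\mid n$. Baranyai's theorem partitions the $3$-sets into parallel classes, that is, into $S(1,3,n)$ systems. In the graph $\Gamma_1$ of disjointness, each parallel class is a clique of size $n/3$. The colour classes are the parallel classes, and no two members of a class are disjoint-free pairs, so the colouring is proper. More precisely, the colour classes should be independent sets, so I would use the complementary viewpoint: take the graph in which $3$-sets are adjacent when they are disjoint. Then a parallel class is a clique of size $n/3$, and the sets containing a fixed point form an independent set of size ${n-1\choose 2}$, with the product equal to ${n\choose3}$. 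The section-regular partition needed, however, is the Baranyai partition with section $A$ = the star at a point. Every image of a star meets each parallel class exactly once, so Neumann's criterion applies directly.

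If $n\equiv 1,3\pmod 6$ and $n\ne7$, the Lu--Teirlinck large sets give a partition of all $3$-sets into Steiner triple systems. In $\Gamma_2$ (adjacent if meeting in at most one point), each STS is a clique of size ${n\choose2}/3$. A star of $3$-sets containing a fixed pair is an independent set, and every image of it meets each STS exactly once, since the STS covers that pair once. This gives section-regularity. For $n=7$ I use the Fano-plane construction given above. Together these cover every $n\equiv 0,1,3\pmod 6$.

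\emph{Separation when $n\equiv 2,4,5\pmod 6$.} Here $3\nmid n$, and $n\not\equiv1,3\pmod 6$, so neither $S(1,3,n)$ nor $S(2,3,n)$ exists. For $n$ at least the bound $F(3)$, Theorem~\ref{t:sn-sep} then says that any non-separation would be witnessed by such a Steiner system, which is impossible. The action is therefore separating.

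\emph{Main obstacle.} The hardest step is the finitely many $n\equiv 2,4,5\pmod 6$ below $F(3)$, because Theorem~\ref{t:sn-sep} is only asymptotic. My first attempt would be to redo the Cherkashin--Shubin argument directly for $k=3$ and make the bounds explicit. The nontrivial choices of $L$ are essentially $\{0\}$ versus $\{1,2\}$ and $\{0,1\}$ versus $\{2\}$, up to swapping $A$ and $B$. The mixed choices, such as $\{1\}$ versus $\{0,2\}$, should be excluded by sharp Frankl--Wilson or Frankl--F\"uredi type bounds, or by the Deza--Erd\H{o}s--Frankl divisibility with effective constants. In the two main cases I would use the exact Erd\H{o}s--Ko--Rado bounds, which hold for $n\ge 2k$ with $t=1$ and for $n\ge (t+1)(k-t+1)=6$ with $t=2$, together with the packing bounds $\lfloor n/3\rfloor$ and $\lfloor \frac n3\lfloor\frac{n-1}2\rfloor\rfloor$. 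These give $|A|\,|B|<{n\choose3}$ strictly whenever the relevant Steiner system does not exist. Any residual small $n$ would be checked by computation, for example using Soicher's results up to degree $624$.
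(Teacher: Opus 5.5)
Your non-synchronizing half is the paper's argument: Baranyai's parallel classes with a point-star as section, a Lu--Teirlinck large set with a pair-star as section, and the Fano construction at $n=7$. For $n\ge F(3)$, your separating half is the paper's implicit appeal to Theorem~\ref{t:sn-sep}. Your first description of the Baranyai case, where parallel classes are both cliques and colour classes of one graph, is muddled, but the Neumann-criterion version you settle on is correct.

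\textbf{The gap.} The gap is the small-$n$ step, and it cannot be closed, because the statement is false at $n=8$. The mixed case $L=\{1\}$, $L'=\{0,2\}$ genuinely occurs there:
\begin{itemize}
\item The lines of a Fano plane on $7$ of the $8$ points are $7$ triples pairwise meeting in exactly one point.
\item The eight $3$-subsets of two complementary $4$-sets pairwise meet in $0$ or $2$ points.
\item Since $7\cdot8=56={8\choose3}$, $S_8$ on $3$-sets is non-separating.
\end{itemize}
It is even non-synchronizing. Identify the points with $\mathbb{F}_2^3$, and colour each triple by the parallel class of the unique plane of $\mathrm{AG}(3,2)$ containing it (the planes form an $S(3,4,8)$). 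This uses $7$ colours. Two triples of the same colour lie in one plane or in complementary planes, so they meet in $2$ or $0$ points. Hence $\Gamma_{\{1\}}$ has $\omega=\chi=7$ and is a nontrivial weakly perfect graph, although $8\equiv2\pmod 6$. The paper's $n=7$ construction is exactly this colouring restricted to the triples avoiding one point.

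So the computation you keep in reserve would refute the statement at $n=8$, not confirm it. The paper's one-line justification misses this because Theorem~\ref{t:sn-sep} is only asymptotic.

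\textbf{Away from $n=8$.} Your plan does close without computation.
\begin{itemize}
\item For $L=\{0\}$ and $L=\{0,1\}$, your Erd\H{o}s--Ko--Rado and packing bounds give strict inequality when $n\equiv2,4,5\pmod 6$, as you say.
\item In the mixed case, a family of triples pairwise meeting in exactly one point is either a sunflower or has at most $7$ members (an easy instance of Deza's theorem). So $|A|\le\max(7,\lfloor(n-1)/2\rfloor)$.
\item A family whose pairwise intersections have size $0$ or $2$ splits into point-disjoint components. Each component lies inside the $3$-subsets of a $4$-set or consists of triples through a fixed pair, so $|B|\le n$.
\item Hence $|A|\,|B|<{n\choose3}$ for every $n\ge9$, and $n=8$ is the only case of equality.
\end{itemize}
The provable statement is therefore: for $n\ge4$, $S_n$ on $3$-sets is synchronizing iff it is separating iff $n\equiv2,4,5\pmod 6$ and $n\ne8$.
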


\subsection{On partitions}

The situation for partitions is more clear-cut. The following result is from
\cite{cdr}. It is interesting that, in most results of this kind, we construct
a graph and prove that it is weakly perfect: typically the clique number is
easy to find and the chromatic number more difficult. But here it is the other
way around!

\begin{theorem}
Let $G$ be the permutation group induced by the symmetric group $S_{kl}$ on
the set of all partitions into $l$ subsets of size $k$, where $k>1$ and $l>2$.
Then $G$ is not synchronizing.
\end{theorem}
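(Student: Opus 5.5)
We construct a non-trivial $S_{kl}$-invariant weakly perfect graph on the set $\Omega$ of partitions of $\{1,\ldots,n\}$ (with $n=kl$) into $l$ parts of size $k$, and then apply Theorem~\ref{t:wp}. The graph $\Gamma$ is defined as follows: two partitions $P,Q\in\Omega$ are adjacent if and only if they have \emph{no part in common}. Since the definition refers only to the parts as sets, $\Gamma$ is invariant under $S_n$.

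\emph{Non-triviality.} We check that $\Gamma$ is neither complete nor null.
\begin{itemize}
\item Since $k>1$, we can find two partitions with no common part, so $\Gamma$ has an edge.
\item Since $l>2$, we can find two distinct partitions sharing a part. Keep one part fixed and re-partition the remaining $(l-1)k\ge 2k$ points in two different ways. So $\Gamma$ has a non-edge.
\end{itemize}
This is exactly where $l>2$ is needed. If $l=2$, two partitions sharing a part are equal, so $\Gamma$ would be complete.

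\emph{Colouring (the easy side).} Colour each partition $P$ by the part of $P$ containing the point $1$. The number of colours is ${n-1\choose k-1}$. Two partitions with the same colour share a part, hence are non-adjacent. So this is a proper colouring, and $\chi(\Gamma)\le{n-1\choose k-1}$.

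\emph{Clique (the harder side).} A clique is a family of partitions whose parts are pairwise distinct $k$-sets. Each partition uses $l$ of the $k$-sets, so a clique has size at most ${n\choose k}/l={n-1\choose k-1}$. Equality holds exactly when the set of all $k$-subsets is partitioned into parallel classes, each being a partition of $\{1,\ldots,n\}$.

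Since $k\mid n$, Baranyai's theorem~\cite{baranyai} guarantees such a partition. This gives a clique of size ${n-1\choose k-1}$.

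\emph{Conclusion.} We have
\[{n-1\choose k-1}\le\omega(\Gamma)\le\chi(\Gamma)\le{n-1\choose k-1},\]
so $\Gamma$ is weakly perfect. By Theorem~\ref{t:wp}, $G$ is not synchronizing.

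The only step needing real input is the existence of the large clique, which we obtain by quoting Baranyai's theorem. Everything else is routine checking.
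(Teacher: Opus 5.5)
Your proof is correct and is essentially the paper's argument: the same graph (partitions adjacent when they have no common part), the same colouring by the part containing a fixed point, and Baranyai's theorem for a clique of size ${kl-1\choose k-1}$. Your explicit check that $\Gamma$ is neither complete nor null, which is where $l>2$ is used, is a detail the paper leaves implicit; it only remarks afterwards that the graph is complete when $l=2$.
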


\begin{proof}
We say that a partition into $l$ sets of size $k$ has type $k^l$.
We construct a graph on the vertex set $\Omega$ of all partitions of type $k^l$
as follows: join two partitions if they have no common part.
We claim that this graph has chromatic number at most ${kl-1}\choose{k-1}$. To
see this, take a point $x$, and colour the vertices by the $(k-1)$-subsets of
$\{1,\ldots,n\}\setminus\{x\}$ by giving a vertex $P$ the colour $A$ if the
part of $P$ containing $x$ is $A\cup\{x\}$. Two vertices with the same colour
have a part in common and so are nonadjacent, so we have a proper colouring.

For the clique number, we use the theorem of Baranyai~\cite{baranyai},
according to which the set of all $k$-subsets of $\{1,\ldots,n\}$ can be
partitioned into classes each of which forms a partition of $\{1,\ldots,n\}$,
where $n=kl$. Each class has $l$ parts; so the number of classes is
\[{kl\choose k}\bigg/l={kl-1\choose k-1}.\]
The classes are pairwise disjoint, so the partitions are pairwise adjacent.

So the clique number is at least ${kl-1\choose k-1}$, and the theorem is proved.
\qed
\end{proof}

For $l=2$, the graph is complete, and the argument fails. The authors of
\cite{cdr} point out that, for $l=2$ and $l=3$, the group is $2$-transitive
and hence synchronizing. (For $S_4$ on the three $2^2$ partitions is isomorphic
to $S_3$, and $S_6$ on the ten $3^2$ partitions to
$\mathrm{P}\Sigma\mathrm{L}(2,9)$.) For $l=4$ and $l=6$ it is
non-synchronizing; but for $n=5$ a computation shows that it is synchronizing.

We can see that $S_8$ on $4^2$ partitions is non-synchronizing as follows. There
is a bijection between $3$-subsets of $\{1,\ldots,7\}$ and $4^2$ partitions
of $\{1,\ldots,8\}$ as follows: $L$ maps to the partition
$\{L\cup\{8\},\{1,\ldots,7\}\setminus L\}$. Now the construction using the
Fano plane in the preceding subsection maps to a section-regular partition
for $S_8$ on $4^2$ partitions.

\section{Classical groups}

The other large groups are classical groups acting on an orbit of subspaces or
pairs of subspaces of complementary dimension. Much less is known in this
case, so the account will be brief.

The first family of classical almost simple groups are the groups
$\mathrm{P}\Gamma\mathrm{L}(n,q)$, generated by invertible linear maps on
$F^n$, where $F$ is the finite field of order $q$, modulo scalars, together
with field automorphisms. So first we need to consider the action of this
group on the projective space whose points are the $1$-dimensional subspaces
of $F^n$, and its action on the set of $k$-dimensional subspaces.

This group is a $q$-analogue of the symmetric group acting on subsets.
However, there is no $q$-analogue of the Deza--Erd\H{o}s--Frankl theorem
(as far as I know), and the theory of ``Steiner systems over finite fields''
(collections of $k$-subspaces over $F^n$ with the property that any
$t$-subspace is contained in exactly one space in the collection) is in its
infancy: the first example (with $(t,k,n)=(2,3,13)$ over the field of $2$
elements) was found by Michael Braun, Tuvi Etzion, Patric {\O}stergard,
Alexander Vardy and Alfred Wassermann~\cite{beovw} in 2016.

There is one case in which more is known, the case $t=1$. A \emph{spread} is
a family $S$ of $k$-dimensional subspaces of $F^n$, where $F$ is a field of 
order~$q$, such that every non-zero vector lies in just one member of $S$.
If a spread exists, then $q^k-1$ (the number of nonzero vectors in a $k$-space)
divides $q^n-1$, so $k\mid n$. Conversely, if $k\mid n$, take a vector space
of dimension $n/k$ over a field of order $q^k$: its $1$-dimensional subspaces
partition the non-zero vectors. Now restricting scalars to the subfield $F$ of
order $q$ gives a spread of $k$-spaces in $F^n$. For $n/k>2$, all spreads are
of this form; for $n/k=2$, there are many others known, and they are closely
connected with the theory of finite projective planes (specifically
\emph{translation planes}).

A \emph{parallelism}, or \emph{packing}, of $F^n$ by $k$-spaces is a
partition of the set of all $k$-spaces into spreads.  Parallelisms have been
much less studied. Existence has been shown in two cases, both with $k=2$:
the case where the field has order~$2$ (by Baker~\cite{baker}), and the case
where $n$ is a power of~$2$ (by Beutelspacher~\cite{beutelspacher}, building on
the case $n=4$ which was settled earlier by Denniston~\cite{denniston}).

To summarize the conclusions:

\begin{theorem}
Let $G$ be the group $\mathrm{P}\Gamma\mathrm{L}(n,q)$, acting on the set of
$k$-subspaces of $F^n$, where $F$ has order $q$.
\begin{enumerate}
\item If $k$ divides $n$, or if $(n,k)=(13,3)$, then $G$ is non-separating.
\item If $k=2$ and $n$ is odd, then $G$ is separating.
\item If $k=2$ and either (i) $n$ is even and $q=2$, or (ii) $n$ is a power
of $2$, then $G$ is non-synchronizing. \qed
\end{enumerate}
\end{theorem}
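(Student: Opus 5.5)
For part (a), the plan is to exhibit a non-trivial $G$-invariant graph $\Gamma$ together with a clique $A$ and an independent set $B$ such that $|A|\cdot|B|$ equals the number of $k$-subspaces. By the preceding theorem on separation (a clique and an independent set witness non-separation), this suffices. Write ${n\brack k}_q$ for the number of $k$-subspaces of $F^n$.

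\emph{Case $k\mid n$.} Let $\Gamma$ join two $k$-subspaces when they meet only in $0$. This graph is $G$-invariant, and it is non-trivial because $k<n/2$ in the primitive range; the case $n=2k$ can be treated in the same way.
\begin{itemize}
\item A spread exists, as shown above, and it is a clique of size $(q^n-1)/(q^k-1)$.
\item The $k$-spaces containing a fixed $1$-space form an independent set of size ${n-1\brack k-1}_q$.
\end{itemize}
The identity
\[\frac{q^n-1}{q^k-1}\,{n-1\brack k-1}_q={n\brack k}_q\]
is the standard Gaussian analogue of $\frac nk{n-1\choose k-1}={n\choose k}$.

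\emph{Case $(n,k)=(13,3)$.} I read this case as $q=2$, using the $q$-Steiner system $S(2,3,13)$ of~\cite{beovw}. Let $\Gamma$ join two $3$-spaces when their intersection has dimension at most $1$.
\begin{itemize}
\item The blocks form a clique of size ${13\brack 2}_2/{3\brack 2}_2$.
\item The $3$-spaces through a fixed $2$-space form an independent set of size ${11\brack 1}_2$.
\end{itemize}
The $q$-analogue of the double-counting identity,
\[{n\brack t}_q{n-t\brack k-t}_q={n\brack k}_q{k\brack t}_q,\]
gives the product ${13\brack 3}_2$, exactly as in comment~(a) after Theorem~\ref{t:sn-sep}.

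For part (b), take $k=2$ and $n$ odd. If $n=3$, any two lines meet, so $G$ is $2$-transitive on lines and hence separating. For $n\ge4$, $G$ has rank~$3$ on lines, with orbitals ``meeting'' and ``skew''. So the only non-trivial invariant graphs are the meeting graph $M$ and its complement, the skew graph. A witness $(A,B)$ for either graph is, up to swapping the two sets, a pair consisting of a set $A$ of pairwise skew lines and a set $B$ of pairwise meeting lines.
\begin{enumerate}
\item Any set of pairwise meeting lines lies either in a star (all lines through one point) or in a plane. This is elementary: three pairwise meeting lines not through a common point span a plane, and any further line meeting all of them lies in that plane. Hence $|B|\le\max\{(q^{n-1}-1)/(q-1),\,q^2+q+1\}=(q^{n-1}-1)/(q-1)$ for $n\ge4$.
\item Pairwise skew lines cover disjoint sets of $q+1$ points, so $|A|\le (q^n-1)/(q^2-1)$.
\item Multiplying the two bounds gives exactly ${n\brack 2}_q$. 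So equality must hold in both, which requires $A$ to be a spread. In particular $q+1$ must divide $q^n-1$. But for odd $n$ we have $q^n\equiv-1\pmod{q+1}$, so $q^n-1\equiv -2\pmod{q+1}$, and $q+1$ cannot divide it. This contradiction shows $G$ is separating.
\end{enumerate}
The one point needing care is making step~1 (the classification of pairwise meeting line sets) fully rigorous.

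For part (c), use the meeting graph $M$ on lines.
\begin{itemize}
\item A star is a clique of size $(q^{n-1}-1)/(q-1)$. By step~1 of part~(b), this is the clique number for $n\ge4$.
\item A parallelism of lines partitions the lines into spreads, and each spread is an independent set in $M$. The number of spreads is
\[{n\brack 2}_q\Big/\frac{q^n-1}{q^2-1}=\frac{q^{n-1}-1}{q-1},\]
so this is a proper colouring using exactly $\omega(M)$ colours.
\end{itemize}
Thus $M$ is weakly perfect whenever a parallelism exists, and Theorem~\ref{t:wp} gives non-synchronization. Existence in the two cases is quoted from Baker~\cite{baker} ($q=2$, $n$ even) and Beutelspacher~\cite{beutelspacher} ($n$ a power of $2$). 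The main obstacle is exactly this existence input, which I would take from the literature rather than reprove.
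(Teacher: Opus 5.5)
Your proposal is correct and follows essentially the paper's (only sketched) argument. For (a) you use spreads against the $k$-spaces through a point, and the binary $q$-Steiner system $S(2,3,13)$ against the $3$-spaces through a fixed $2$-space; your reading $q=2$ in the $(13,3)$ case is the right one. For (b) you use the rank-$3$ counting argument, where equality forces a line spread, which is impossible for odd $n$. For (c) you use parallelisms (Baker, Beutelspacher) to give a colouring matching the star clique.

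One small correction to (a): the disjointness graph is non-trivial because $2\le k\le n/2$, not because $k<n/2$; the action on $n/2$-subspaces is in fact primitive. For $k=1$ that graph is complete and $G$ is $2$-transitive, so part (a) tacitly assumes $k>1$.
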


The argument for part (b) relies on the fact that the group acts transitively
on pairs of $2$-spaces with each possible dimension ($0$ or $1$) of
intersection, and the only clique in the disjointness graph which is large
enough is a spread. The transitivity holds also for the subgroup
$\mathrm{PSL}(n,q)$ induced by transformations with determinant~$1$, so
separation holds also for this subgroup.

So the $q$-analogue of Theorem~\ref{t:2sets} holds for $q=2$.
It may be that the $q$-analogues of Theorem~\ref{t:sn-sep} and Keevash's
Theorem hold in full generality, but we are some way from proving that.

\medskip

Things are even more complicated for the other classical groups. These preserve
non-degenerate symplectic (alternating bilinear), Hermitian or quadratic forms
on a finite vector space. The geometries preserved by these groups are the
\emph{polar spaces} over finite fields, axiomatised and classified in a major
work by Jacques Tits~\cite{tits} in 1974.

A polar space consists of a set of points with a collection of subsets called
subspaces, such that a subspace together with the subspaces it contains form
a projective space. (There are other axioms which we will not need). The
classical projective spaces are named after the groups (symplectic, unitary
or orthogonal) containing them; the points are the $1$-dimensional subspaces
on which the form defining the group vanishes. The account below ignores a
certain complication for orthogonal groups in characteristic $2$.

The group acts with rank~$3$ on the set of points (this means it has just two
orbits on pairs of points, namely orthogonal and non-orthogonal with respect
to the form (in the symplectic and unitary cases) or the bilinear form obtained
by polarizing the quadratic form (in the orthogonal case). Thus there is a
single complementary pair of $G$-invariant graphs; we will use the orthogonality
graph. The maximal cliques in this graph are the maximal subspaces in the polar
space. Thus, an upper bound for the size of an independent set is $R/S$, where
$R$ is the number of points in the space and $S$ the number of points in a 
maximal clique. A set meeting this bound is called an \emph{ovoid}, and is
characterised by the property that it meets each maximal subspace in one point.
Also, a \emph{spread} is a partition of the points into maximal subspaces.
So we have:

\begin{theorem}
Let $G$ be a classical group, acting on the points of its polar space.
\begin{enumerate}
\item $G$ is separating if and only if the polar space has an ovoid.
\item $G$ is synchronizing if and only if the polar space has either an ovoid
and a spread, or a partition into ovoids. \qed
\end{enumerate}
\end{theorem}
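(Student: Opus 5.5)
The plan is to use the rank~3 property to reduce everything to the single orthogonality graph $\Gamma$ and its complement $\overline{\Gamma}$. These are the only non-trivial $G$-invariant graphs. By Theorem~\ref{t:wp}, $G$ is non-synchronizing if and only if one of them is weakly perfect. By the characterization of separation, $G$ is non-separating if and only if one of them satisfies $\omega\cdot\chi=R$, or equivalently $\omega\cdot\alpha=R$, via a clique and an independent set witnessing non-separation. The basic inequality is the clique--coclique bound for vertex-transitive graphs, $\omega(\Gamma)\alpha(\Gamma)\le R$, which the averaging argument of the separation section gives. Since $\omega(\Gamma)=S$ (maximal cliques of $\Gamma$ are the maximal subspaces, all of the same size $S$), we get $\alpha(\Gamma)\le R/S$. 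Equality holds exactly when there is an ovoid: a coclique of size $R/S$ meets every maximal subspace in at most one point, and by the averaging it meets every one in exactly one point.

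For part (a), note that swapping $\Gamma$ and $\overline\Gamma$ swaps the roles of clique and coclique, so the condition $\omega\alpha=R$ is the same for both graphs. Hence $G$ is non-separating iff $S\cdot\alpha(\Gamma)=R$, i.e.\ iff an ovoid exists. The one point to verify is that a maximum clique of $\Gamma$ is a maximal subspace. I would argue this from the polar space axioms: a set of pairwise orthogonal singular points spans a totally singular subspace.

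For part (b), first take $\Gamma$. It is weakly perfect iff $\chi(\Gamma)=S$. A colouring by $S$ independent sets of $R$ points needs every colour class to have size $R/S$, so each class is an ovoid; thus the condition is a partition into ovoids. Now take $\overline\Gamma$. Here $\omega(\overline\Gamma)=\alpha(\Gamma)$ and colour classes are cliques of $\Gamma$, hence of size at most $S$. So $\chi(\overline\Gamma)\ge R/S\ge\alpha(\Gamma)$, and weak perfection forces both equalities. This means an ovoid exists (clique number $R/S$) and a partition into $R/S$ maximal subspaces exists, i.e.\ a spread. The converses are immediate from the remark that a clique of size $m$ together with an $m$-colouring gives weak perfection: ovoid plus spread makes $\overline\Gamma$ weakly perfect, and a partition into ovoids plus a maximal subspace makes $\Gamma$ weakly perfect.

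The main obstacle is the rank~3 claim. It requires that $\Gamma$ and $\overline\Gamma$ be the only invariant graphs, and that maximum cliques be exactly the generators. Both rest on the cited classical facts (Witt's theorem) and on the caveat already flagged for orthogonal groups in characteristic $2$. I would simply take these as given, together with the averaging bound.
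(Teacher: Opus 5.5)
Your proof is correct and is essentially the paper's own argument with the details supplied: rank~$3$ leaves only $\Gamma$ and $\overline{\Gamma}$, $\omega(\Gamma)=S$, the clique--coclique bound gives $\alpha(\Gamma)\le R/S$ with equality exactly for an ovoid, and weak perfection of $\Gamma$ (respectively $\overline{\Gamma}$) means a partition into ovoids (respectively an ovoid plus a spread). Two corrections to the write-up. First, what you prove is that $G$ is \emph{non}-separating iff there is an ovoid, and \emph{non}-synchronizing iff there is either an ovoid and a spread or a partition into ovoids; this is the intended content, since the printed statement has lost the ``non-'' (the paper's later remark confirms this: the $5$-dimensional quadrics have ovoids but give synchronizing, non-separating groups), so you should say explicitly that this is the form you are proving. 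Second, drop your claimed equivalence of $\omega\chi=R$ with $\omega\alpha=R$, which is false in general (the octahedron and its complement $3K_2$ give a counterexample) and unnecessary, because the separation criterion actually established, and the only one you use, is $\omega\alpha=R$.
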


There has been a lot of work in finite geometry on ovoids and spreads in
classical polar spaces, though a complete answer is not yet known:
see \cite{bglr2}. In particular, the polar spaces
associated with non-degenerate quadratic forms on $5$-dimensional vector spaces
over odd prime fields have ovoids, but neither spreads nor partitions into
ovoids. This gives us the example of an infinite family of permutation groups
which are synchronizing but not separating, promised earlier.

Bamberg \emph{et al.}~\cite{bglr2} also begin the study of classical groups
acting on the set of subspaces of fixed dimension greater than~$1$.

\section{Other groups}

We have no general methods for deciding whether ``small'' primitive groups
(of polynomially-bounded order) are synchronizing; this is probably the
biggest open problem.

It is instructive to go through the list of non-synchronizing groups of small
degree \cite[Table 3]{soicher}, seeking a geometric explanation for the
existence of a weakly perfect $G$-invariant graph. Here are two examples.
\begin{itemize}
\item $M_{12}.2$, with degree $144$. This group has an orbital graph which
is isomorphic to the Hamming graph $H(2,12)$. (The action is on the Cartesian
product of two sets of size~$12$ affording the two inequivalent $5$-transitive
actions of the Mathieu group $M_{12}$, interchanged by the outer automorphism.)
\item $\mathrm{P}\Sigma\mathrm{U}(3,5)$,  with degree $175$. This group
acts on the line graph of the Hoffman--Singleton graph
$\mathrm{HoSi}$~\cite{hs}. Its clique number is the valency of
$\mathrm{HoSi}$, namely $7$; and its chromatic number is the chromatic index
or edge-chromatic number of $\mathrm{HoSi}$, which was shown to also be $7$ by
S.~M. Cioab\u{a}, K. Guo, and W.~H. Haemers~\cite{cgh}. (Soicher attributes this
fact also to Gordon Royle.) Indeed, the authors of \cite{cgh} conjecture that
all strongly regular graphs on an even number of vertices apart from the
Petersen graph have chromatic index equal to their valency, which would 
imply that the automorphism groups of their line graphs are non-synchronizing.
\end{itemize}


\begin{thebibliography}{99}

\bibitem{abc}
Mohammed Aljohani, John Bamberg and Peter J. Cameron,
Synchronization and separation in the Johnson scheme,
\textit{Portugaliae Mathematica} \textbf{74} (2018), 213--232; doi:
\texttt{10.4171/PM/2003}.

\bibitem{acs}
Jo\~ao Ara\'ujo, Peter J. Cameron and Benjamin Steinberg,
Between primitive and 2-transitive: Synchronization and its friends,
\textit{Europ. Math. Soc. Surveys} \textbf{4} (2017), 101--184; doi:
\texttt{10.4171/EMSS/4-2-1}.

\bibitem{bcps}
R. A. Bailey, Peter J. Cameron, Cheryl E. Praeger and Csaba Schneider,
The geometry of diagonal groups,
\textit{Trans. Amer. Math. Soc.} \textbf{375} (2022), 5259--5311; doi:
\texttt{10.1090/tran/8507}.

\bibitem{baker}
R. D. Baker,
Partitioning the planes of $\mathrm{AG}_{2m}(2)$ into $2$-designs,
\textit{Discrete Math.} \textbf{15} (1976), 205--211; doi:
\texttt{10.1016/0012-365X(76)90025-X}

\bibitem{bglr}
John Bamberg, Michael Giudici, Jesse Lansdown and Gordon F. Royle,
Synchronising primitive groups of diagonal type exist,
\textit{Bull. London Math. Soc.} \textbf{54} (2022), 1131--1144; doi:
\texttt{10.1112/blms.12619}

\bibitem{bglr2}
John Bamberg, Michael Giudici, Jesse Lansdown and Gordon F. Royle,
Tactical decompositions in finite projective spaces and non-spreading
classical group actions,
\textit{Designs, Codes, Cryptography} \textbf{93} (2025), 1127--1141; doi:
\texttt{10.1007/s10623-024-01490-y}

\bibitem{baranyai}
Z. Baranyai,
On the factorization of the complete uniform hypergraph,
\textit{Colloq. Math. Soc. Janos Bolyai} \textbf{10} (1975), 91--108.

\bibitem{beutelspacher}
Albrecht Beutelspacher,
On parallelisms in ﬁinite projective spaces,
\textit{Geom. Dedicata.} \textbf{3} (1974), 35--40; doi:
\texttt{10.1007/BF00181359}

\bibitem{bottesch}
Ralph C. Bottesch,
On $W[1]$ hardness as evidence for intractability,
in: Potapov, Igor (ed.) et al., 43rd international symposium on mathematical
foundations of computer science (MFCS 2018),
Schloss Dagstuhl -- Leibniz Zentrum für Informatik. LIPIcs -- Leibniz Int.
Proc. Inform. \textbf{117} (2018), Article 73, 15pp.; doi:
\texttt{10.4230/LIPIcs.MFCS.2018.73}.

\bibitem{beovw}
Michael Braun, Tuvi Etzion, Patric {\O}stergard, Alexander Vardy and Alfred
Wassermann,
Existence of $q$-analogues of Steiner systems,
\textit{Forum Math. Pi} \textbf{4}, Paper No. e7, 14pp. (2016); doi:
\texttt{10.1017/fmp.2016.5}.

\bibitem{bccsz}
John N. Bray, Qi Cai, Peter J. Cameron, Pablo Spiga and Hua Zhang,
The Hall--Paige conjecture, and synchronization for affine and diagonal groups,
\textit{J. Algebra} \textbf{545} (2020), 27--42; doi:
\texttt{10.1016/j.jalgebra.2019.02.025}

\bibitem{b1}
Timothy C. Burness, 
On base sizes for actions of finite classical groups,
\textit{J. London Math. Soc.} (2) \textbf{75} (2007), 545--562; doi:
\texttt{10.1112/jlms/jdm033}

\bibitem{b2}
Timothy C. Burness, Martin W. Liebeck and Aner Shalev,
Base sizes for simple groups and a conjecture of Cameron,
\textit{Proc. London Math. Soc.} (3) \textbf{98} (2009), 116--162; doi:
\texttt{10.1112/plms/pdn024}

\bibitem{b3}
Timothy C. Burness, E. A. O'Brien and Robert A. Wilson,
Base sizes for sporadic simple groups,
\textit{Israel J. Math.} \textbf{177} (2010), 307--333; doi:
\texttt{10.1007/s11856-010-0048-3}

\bibitem{b4}
Timothy C. Burness, Robert M. Guralnick and Jan Saxl,
On base sizes for symmetric groups,
Bull. Lond. Math. Soc. 43 (2011), 386--391; doi:
\texttt{10.1112/blms/bdq123}

\bibitem{cdr}
Peter J. Cameron, Coen del Valle, and Colva M. Roney-Dougal,
Regular bipartite multigraphs have many (but not too many) symmetries,
\textit{Discrete Analysis}, October 2025; doi:
\texttt{10.19086/da.144894}

\bibitem{cayley}
A. Cayley,
On the triadic arrangements of seven and fifteen things,
\textit{London, Edinburgh and Dublin Philos. Mag. and J. Sci.}
\textbf{3} (1850) no. 37, 50--53.

\bibitem{cerny}
J. \v{C}ern\'y,
Pozn\'amka homog\'ennym eksperimentom s konen\'ymi automatami
[A remark on homogeneous experiments with finite automata],
\textit{Mat.-Fyz. \v{C}asopis Slovensk. Akad. Vied.} \textbf{14} (1964),
49--75.

\bibitem{cs}
Danila Cherkashin and Jacob Shubin,
Short proofs of three combinatorial results in the Johnson scheme,
arXiv \texttt{2605.30092} (2026).

\bibitem{cgh}
S. M. Cioab\u{a}, K. Guo and W.~H. Haemers,
The chromatic index of strongly regular graphs,
\textit{Ars Math.  Contemp.} \textbf{20} (2021), 187--194; doi:
\texttt{10.26493/1855-3974.2435.ec9}

\bibitem{denniston}
Ralph H. F. Denniston,
Some packings of projective spaces,
\textit{Atti Accad. Naz. Lincei, Cl. Scienze Fisiche,
Matematiche e Naturali, Rendiconti}, (8), \textbf{52} (1972), 36--40.

\bibitem{def}
M. Deza, Paul Erd\H{o}s and P. Frankl,
Intersection properties of systems of finite sets,
\textit{Proc. Lond. Math. Soc.} (3) \textbf{36} (1978), 369--384; doi:
\texttt{10.1112/plms/s3-36.2.369}.

\bibitem{ekr}
P\'aul Erd\H{o}s, Chao Ko and Richard Rado,
Intersection theorems for systems of finite sets,
\textit{Quart. J. Math. Oxford} II, \textbf{12} (1961), 313--320; doi:
\texttt{10.1093/qmath/12.1.313}.

\bibitem{evans}
A. B. Evans,
The admissibility of sporadic simple groups,
\textit{J. Algebra} \textbf{321} (2009), 105--116; doi:
\texttt{10.1016/j.jalgebra.2008.09.028}.

\bibitem{hp}
M. Hall Jr. and L. J. Paige,
Complete mappings of finite groups,
\textit{Pacific J. Math.} \textbf{5} (1955), 541--549; doi:
\texttt{10.2140/pjm.1955.5.541}.

\bibitem{hs}
A. J. Hoffman and R. R. Singleton,
On Moore graphs with diameters $2$ and $3$,
\textit{IBM J. Res. Dev.} \textbf{4} (1960), 497--504; doi:
\texttt{10.1147/rd.45.0497}. 

\bibitem{keevash}
Peter Keevash,
The existence of designs,
arXiv \texttt{1401.3665}.

\bibitem{kirkman}
Thomas P. Kirkman,
On a problem in combinations,
\textit{Cambridge and Dublin Mathematical Journal} (1847), 191--204.

\bibitem{lu}
Jia-Xi Lu,
On large sets of disjoint Steiner triple systems I-VI,
\textit{J. Combinatorial Theory} (A) \textbf{34} (1983), 140--192.

\bibitem{neumann}
Peter M. Neumann,
Primitive groups and their section-regular partitions,
\textit{Michigan Math. J.} \textbf{58} (2009), 309--322; doi:
\texttt{10.1307/mmj/1242071695}.

\bibitem{soicher}
Leonard H. Soicher,
The non-synchronizing primitive groups of degree up to $624$,
\textit{J. Algebra} \textbf{703} (2026) 422--434; doi:
\texttt{10.1016/j.jalgebra.2025.08.003}

\bibitem{teirlinck}
Luc Teirlinck,
A completion of Lu's determination of the spectrum for large sets of disjoint
Steiner triple systems,
\textit{J. Combionatorial Theory} (A) \textbf{57} (1991), 302--305; doi: 
\texttt{10.1016/0097-3165(91)90053-J}

\bibitem{tits}
Jacques Tits,
\textit{Buildings of spherical type and finite BN-pairs},
Lecture Notes in Mathematics \textbf{386}, Springer Verlag,
Berlin--Heidelberg--New York 1974; doi:
\texttt{10.1007/978-3-540-38349-9}. 

\bibitem{volkov}
Mikhail V. Volkov,
Synchronizing automata and the \v{C}ern\'y conjecture,
in \textit{Mathematical Foundations of Computer Science 2006}, 11-27,
Lecture Notes in Comp. Sci. \textbf{5196}, Springer, Berlin, 2008; doi:
\texttt{10.1007/978-3-540-88282-4\_4}.

\bibitem{wilcox}
S. Wilcox,
Reduction of the Hall--Paige conjecture to sporadic simple groups,
\textit{J. Algebra} \textbf{321} (2009), 1407--1428; doi:
\texttt{10.1016/j.jalgebra.2008.11.033}.

\bibitem{woolhouse}
W. S. B. Woolhouse,
Prize Question 1733,
\textit{Lady's and Gentleman's Diary}, 1844.

\end{thebibliography}
\end{document}